\theoremstyle{plain}
\newtheorem{remark}{Remark}
\numberwithin{equation}{section}
\long\def\salta#1{\relax}
\newcommand{\R}{{I\!\!R}}
\newcommand{\dint}{\dyle\int}
\newcommand{\re}{{I\!\!R}}
\newcommand{\ren}{\re^N}
\newcommand{\dyle}{\displaystyle}
\newcommand{\io}{\int\limits_\O}
\renewcommand{\a }{\alpha }
\renewcommand{\b }{\beta }
\renewcommand{\d }{\delta }
\newcommand{\D }{\Delta }
\newcommand{\e }{\varepsilon }
\newcommand{\g }{\gamma}
\renewcommand{\l }{\lambda }
\renewcommand{\L }{\Lambda }
\newcommand{\s }{\sigma }
\renewcommand{\O }{\Omega }
\newenvironment{pf}{\noindent{\sc Proof}.\enspace}{\rule{2mm}{2mm}\medskip}
\newtheorem{Theorem}{Theorem}[section]
\newtheorem{Definition}[Theorem]{Definition}
\newtheorem{Proposition}[Theorem]{Proposition}
\newtheorem{remarks}[Theorem]{Remarks}
\begin{document}

\title[Fractional Kirchhoff problem]{Fractional Kirchhoff problem with Hardy potential: Breaking of resonance}
\author[B. Abdellaoui, A. Azzouz, A. Bensedik]{B. Abdellaoui$^*$, A. Azzouz, A. Bensedik}
\thanks{$^*$ is  partially supported by project
MTM2016-80474-P, MINECO, Spain.} \keywords{fractional Laplacian, Kirchhoff-type problem, Hardy potential.
\\
\indent 2000 {\it Mathematics Subject Classification:MSC 2000: 26A33,  34B15,34C25,58E30,35J65,35R11} }

\address{\hbox{\parbox{5.7in}{\medskip\noindent {B. Abdellaoui, Laboratoire d'Analyse Nonlin\'eaire et Math\'ematiques
Appliqu\'ees. \hfill \break\indent D\'epartement de
Math\'ematiques, Universit\'e Abou Bakr Belka\"{\i}d, Tlemcen,
\hfill\break\indent Tlemcen 13000, Algeria.}}}}

\address{\hbox{\parbox{5.7in}{\medskip\noindent{A. Azzouz, Department of Mathematics, Faculty of Science,\\ University of Saida,\\
        BP. 138, 20000, Saida, Algeria. }}}}

\address{\hbox{\parbox{5.7in}{\medskip\noindent {A. Bensedik, Laboratoire Système dynamiques et applications. \hfill \break\indent D\'epartement de
Math\'ematiques, Universit\'e Abou Bakr Belka\"{\i}d, Tlemcen,
\hfill\break\indent Tlemcen 13000, Algeria.
\\[3pt]
        \em{E-mail addresses: }\\{\tt boumediene.abdellaoui@inv.uam.es, \tt abdelhalim.azzouz.cus@gmail.com, \tt a\_bensedik@mail.univ-tlemcen.dz        }.}}}}

\date{}

\begin{abstract}
In this paper we consider a fractional Kirchhoff problem with Hardy potential,
\begin{equation*}
\left\{
\begin{array}{rcll}
M(\dyle \iint_{\ren\times \ren} \frac{|u(x)-u(y)|^q}{|x-y|^{N+qs}}\,dx\,dy)(-\Delta) ^{s}u & = & \l\dfrac{u}{|x|^{2s}}+f(x,u) &\text{ in } \Omega, \\
u &> & 0 & \text{ in } \Omega,\\
u& = & 0 &\text{ in } \mathbb{R}^{N}\setminus \Omega,
\end{array}%
\right.
\end{equation*}
where $\Omega \subset \ren$ is a bounded domain containing the origin, $s\in (0, 1)$, $q\in (1,2]$ with $N>2s$, $\l>0$ and $f$ is a measurable nonnegative function with  suitable hypotheses.

The main goal of this work is to get the existence of solution for the largest class of $f$ without any restriction on $\l$. Our result covers also the local case $s=1$.
\end{abstract}

\maketitle

\section{Introduction}
The present work is concerned with existence of positive solutions for a class of fractional equation involving a Kirchhoff term and singular potential. More precisely we consider the problem
\begin{equation*}{(\mathcal{P}_{s,q})}
\left\{
\begin{array}{rcll}
M\bigg(\dyle \iint_{\ren\times \ren} \frac{|u(x)-u(y)|^q}{|x-y|^{N+qs}}\,dx\,dy\bigg)(-\Delta) ^{s}u & = & \l\dfrac{u}{|x|^{2s}}+f(x,u) & \text{ in } \Omega, \\
u & > & 0 & \text{ in } \Omega,\\
u & = & 0 & \text{ in } \mathbb{R}^{N}\setminus \Omega,
\end{array}%
\right.
\end{equation*}
where $\Omega \subset \ren$ is a bounded domain containing the origin, $s\in (0, 1)$, $q\in (1,2]$ with $N>2s$, $\l>0$ and $f$ is a measurable nonnegative function with  suitable hypotheses.
$M$  is a positive function defined on $\mathbb{R}^+$.
\newline
Here, for $0<s<1$, the fractional Laplacian
$(-\Delta)^s $  is defined by
\begin{equation}\label{fraccionario}
(-\Delta)^{s}u(x):=a_{N,s}\mbox{ P.V. }\int_{\mathbb{R}^{N}}{\frac{u(x)-u(y)}{|x-y|^{N+2s}}\, dy},
\end{equation}
where
$$a_{N,s}:=2^{2s-1}\pi^{-\frac N2}\frac{\Gamma(\frac{N+2s}{2})}{|\Gamma(-s)|}$$
is the normalization constant such that the identity
$$(-\Delta)^{s}u=\mathcal{F}^{-1}(|\xi|^{2s}\mathcal{F}u),\, \xi\in\mathbb{R}^{N} $$ holds for all $u\in \mathcal{S}(\mathbb{R}^N)$
where $\mathcal{F}u$ denotes the Fourier transform of $u$ and
$\mathcal{S}(\mathbb{R}^N)$ is the Schwartz class of tempered functions.

The problem $(\mathcal{P}_{s,q})$ is related to the following Hardy inequality, proved in \cite{B} and \cite{He},
\begin{equation}\label{hardy}
\frac{a_{N,s}}{2}\int\int_{D_\Omega}{\frac{|u(x)-u(y)|^2}{|x-y|^{N+2s}}}\,
dx\, dy\geq
\Lambda_{N,s}\int_{\Omega}{\frac{u^2}{|x|^{2s}}\,dx},\,u\in
\mathcal{C}^{\infty}_{0}(\ren),
\end{equation}
where $$
{D_\Omega} := \ren \times \ren \setminus \big( \mathcal{C} \Omega \times  \mathcal{C} \Omega \big) \,.
$$
and
\begin{equation}\label{cteHardy}
\Lambda_{N,s}=
2^{2s}\dfrac{\Gamma^2(\frac{N+2s}{4})}{\Gamma^2(\frac{N-2s}{4})}.
\end{equation}
Notice that $\L_{N,s}$ is optimal and not attained. \\

Nonlocal problems related to the Hardy potential were widely studied in the last years. Before staring our main result, let us begin by some previous results. In the case where $M\equiv 1$, then it is well known that problem $(\mathcal{P}_{s,q})$ has a solution if and only if $\l\le \L_{N,s}$. Moreover, for $\l\in (0,\L_{N,s}]$ fixed and taking into consideration the singular behavior near the origin of the Hardy potential, it holds that a natural condition is imposed on $f$  in order to get the existence of a solution. More precisely, if $f$ does not depends on $u$ ($f(x,u):=f(x))$, then $(\mathcal{P}_{s,q})$ has a solution if and only if $\dyle\io |x|^{-\g_{\l}}f(x)\,dx<\infty$ where $\g_{\l}$ will be explicitly stated in the next section. \\
If $f(x,u)=u_+^p$, then existence of solution holds if and only if $p<p_+(\l)<2s$. We refer for instance to \cite{AMPP} and \cite{BDT} for more details.

Where $M$ is not identically constant, then for the local case $s=1$ and without the Hardy potential, the problem can be expressed in the general form
\begin{equation*}{(\mathcal{P}_K)}
\quad-M(\displaystyle\int_\Omega \left|\nabla u \right|^2 dx)\Delta u=g(x, u) \quad\text{in} \quad \Omega, \quad u=0 \quad \text{on} \quad \partial\Omega.
\end{equation*}
$(\mathcal{P}_K)$ is well known as Kirchhoff type problem and is widely studied in the literature. The first motivation was the study of the stationary solution to the classical equation
$$\rho \dfrac{\partial^2u}{\partial t^2}-\left( \frac{p_0}{h}+\frac{E}{2L}\int_{0}^{L}\left|\dfrac{\partial u}{\partial x} \right|^2 dx \right)\dfrac{\partial^2 u}{\partial x^2} =0.$$
This equation is an extension of the classical d'Alembert's wave equation by considering the effects of the changes in the length of the string during the vibrations. In this equation, $L$  is the length of the string, $h$  is the area of the cross-section, $E$ is the Young modulus of the material, $\rho$ is the mass density and $P_0$ is the initial tension. The stationary multidimensional Kirchhoff equation is given by
\begin{equation*}{(\mathcal{P}_K)}
\quad-M(\displaystyle\int_\Omega \left|\nabla u \right|^2 dx)\Delta u=g(x, u) \quad\text{in} \quad \Omega, \quad u=0 \quad \text{on} \quad \partial\Omega,
\end{equation*}
which received much attention after the work of Lions \cite{lio}, where a functional setting was proposed to the problem.

The Kirchhoff function $M$ is assumed Lipschitz continuous, but not always monotone, even if the model proposed by Kirchhoff in 1883 $M(t)= a + bt^\beta$ where $a, b\ge 0$ and $\beta\ge0$ is monotone, see discussion in \cite{MRS} and \cite{PV}. If $M(0)=0$, the Kirchhoff problem is called degenerate, we may refer to \cite{BV} and references therein for more details on this case.

In \cite{Alcor}, using variational arguments the authors proved the existence of a positive solution of ${(\mathcal{P}_K)}$  if $M$ is a positive non-increasing function and $g$ satisfies the Ambrosetti-Rabinowitz condition. The case where $M$ is a non-decreasing function was considered in  \cite{azben}. In \cite{Ma}, the author collected some related problems to ${(\mathcal{P}_K)}$ and unified the presentation of the results. Especially, for $g$ such that $g(x,u)u\leq a \left|u \right|^2+b\left|u \right|$ ($a, b >0$) he showed both existence and uniqueness of solutions. In the case where $g(x,u)$ is asymptotically linear at infinity with respect to $u$, some existence results are proved in \cite{azben}. The singular case was considered in \cite{cor}.

The nonlocal case $s\in (0,1)$, was considered recently by several authors, including the critical and singular cases. We refer to \cite{MRS, PP, PXZ} and references therein for further details.

\

The novelty in our paper is to get some interactions between the Kirchhoff term and the Hardy potential in order to get the existence of non negative solution for a largest class of the datum $f$ and without any restriction on the parameter $\l$. We will show that, under some growth hypothesis on the Kirchhoff term, it is possible to break any resonance effect of the Hardy term. The resut covers also the local case $s=1$.

\

Through this paper we will assume that $M$ is a continuous function such that:
\newline (M1)  $M: \re^+\to \re^+$ is increasing.
\newline (M2)  $\exists m_0>0;\quad M(t)\geq m_0 \quad \forall t\in \re^+.$
\newline (M3) $\underset{t\rightarrow \infty }{\lim }M(t)=+\infty$

\

We begin by analyzing the case where $f$ does not depend on $u$, namely $f(x,u)=:=f(x)$. Then for $q=2$ and hence the problem has a variational structure, we are able to proof the next result.
\begin{Theorem}
Assume that $f \in L^{2}(\Omega)$ is such that $f\gneqq 0$, then for all $\lambda>0$, problem $(\mathcal{P}_{s,2})$ has a unique positive solution $\bar{u}\in H_{0}^{s}(\Omega)$.
\end{Theorem}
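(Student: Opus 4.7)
The plan is to reduce the Kirchhoff problem to a one-parameter family of linear problems and close the loop via a scalar fixed-point argument. For each parameter $\alpha>\lambda/\Lambda_{N,s}$, I first consider the linear auxiliary problem
\begin{equation*}
\alpha\,(-\Delta)^s u \;=\; \lambda\,\frac{u}{|x|^{2s}} \,+\, f \quad\text{in }\Omega, \qquad u\equiv 0 \text{ in }\mathbb{R}^N\setminus\Omega.
\end{equation*}
Because $\lambda/\alpha<\Lambda_{N,s}$, the Hardy inequality \eqref{hardy} makes the bilinear form $\alpha\,B(u,v)-\lambda\int_\Omega uv\,|x|^{-2s}\,dx$ (with $B$ the Gagliardo bilinear form on $H^s_0(\Omega)$) coercive, so Lax--Milgram yields a unique weak solution $u_\alpha\in H^s_0(\Omega)$; by the weak maximum principle and $f\gneqq 0$, $u_\alpha>0$ in $\Omega$.

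Set $g(\alpha):=\iint_{\mathbb{R}^N\times\mathbb{R}^N}|u_\alpha(x)-u_\alpha(y)|^2|x-y|^{-(N+2s)}\,dx\,dy$, the quantity that will eventually feed into $M$. Comparing the equations for $\alpha_1<\alpha_2$ (both $\lambda/\alpha$ and $f/\alpha$ decrease in $\alpha$) and applying the weak comparison principle gives $u_{\alpha_1}\ge u_{\alpha_2}$, so $g$ is nonincreasing. Continuity of $\alpha\mapsto u_\alpha$ in $H^s_0(\Omega)$ on compact subintervals of $(\lambda/\Lambda_{N,s},\infty)$ is a routine consequence of the uniform coercivity estimate; hence $h(\alpha):=M(g(\alpha))$ is continuous and nonincreasing by (M1).

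The two key asymptotics now follow. On the one hand, as $\alpha\to\infty$, testing the equation against $u_\alpha$ and combining Hardy with H\"older yields $(\alpha-\lambda/\Lambda_{N,s})\,B(u_\alpha,u_\alpha)\le C\,\|f\|_{L^2}\sqrt{B(u_\alpha,u_\alpha)}$, whence $g(\alpha)\to 0$ and $h(\alpha)\to M(0^+)<\infty$. On the other hand, as $\alpha\to(\lambda/\Lambda_{N,s})^+$ I claim $g(\alpha)\to+\infty$. The route I would take is variational: the minimum of the convex energy attached to the auxiliary problem equals $-\tfrac{1}{2\alpha}\int_\Omega f\,u_\alpha\,dx$; choosing a trial function $\varphi>0$ whose Rayleigh quotient $B(\varphi,\varphi)/\int_\Omega\varphi^2|x|^{-2s}\,dx$ approaches the non-attained constant $\Lambda_{N,s}$ while keeping $\int_\Omega f\varphi>0$, then optimizing over the amplitude of $\varphi$, drives this infimum to $-\infty$, which forces $\int_\Omega f\,u_\alpha\to\infty$ and by H\"older and Hardy $g(\alpha)\to\infty$. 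Hypothesis (M3) then promotes this to $h(\alpha)\to+\infty$.

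Collecting, $h:(\lambda/\Lambda_{N,s},\infty)\to(M(0^+),\infty)$ is continuous and nonincreasing, while $\alpha\mapsto\alpha$ is strictly increasing, so the intermediate value theorem produces a unique $\alpha^*$ with $h(\alpha^*)=\alpha^*$, and $\bar u:=u_{\alpha^*}$ is by construction a positive $H^s_0(\Omega)$ solution of $(\mathcal{P}_{s,2})$; uniqueness follows from the uniqueness of $\alpha^*$ combined with the Lax--Milgram uniqueness for the linear auxiliary problem. The delicate point is the blow-up claim $g(\alpha)\to\infty$: a naive weak-limit procedure only produces an $H^s_0$ candidate solution of the resonant equation whose nonexistence is not automatic from the non-attainability of $\Lambda_{N,s}$. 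The trial-function/energy-divergence device above (or equivalently a pairing against an $L^2$-normalized approximate ground state of $(-\Delta)^s-(\lambda/\alpha)|x|^{-2s}$ whose first eigenvalue tends to $0$) seems the cleanest way to establish this step, and it is precisely where the hypotheses (M2)--(M3) interact with the Hardy constant to \emph{break the resonance}.
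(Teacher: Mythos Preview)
Your approach is genuinely different from the paper's. The paper proceeds by truncation: it solves approximating problems $(\mathcal{P}^n_s)$ in which the Hardy term is replaced by $\lambda T_n(u_n^+)/(|x|^{2s}+\tfrac1n)$, obtains each $u_n$ as a minimizer of the associated energy, proves $\{u_n\}_n$ is bounded in $H^s_0(\Omega)$ by contradiction via (M3), shows $\lambda/M(\|u_n\|_{s,2}^2)<\Lambda_{N,s}-\rho$ by invoking the local-behaviour results of \cite{AMPP}, and then carries out a fairly delicate strong-convergence argument to pass to the limit; uniqueness is proved separately by comparison and the strict monotonicity of $t\mapsto t\,M(t)$. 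Your scalar fixed-point route is more direct: once the linear theory for each $\alpha$ is in hand, existence and uniqueness both come from a one-dimensional intermediate-value argument, and the strong-convergence step disappears entirely. One small omission: the implication ``$u_{\alpha_1}\ge u_{\alpha_2}$ pointwise $\Rightarrow g(\alpha_1)\ge g(\alpha_2)$'' is not immediate for the Gagliardo seminorm; it follows after testing the equation against $u_\alpha$ to get $g(\alpha)=\alpha^{-1}\big(\lambda\int_\Omega u_\alpha^2|x|^{-2s}\,dx+\int_\Omega fu_\alpha\,dx\big)$.

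There is, however, a real gap at the step you yourself flag as delicate. The trial-function device for $g(\alpha)\to\infty$ does not close as written: any minimizing sequence $\varphi_n$ for the Hardy quotient, normalized by $\int_\Omega\varphi_n^2|x|^{-2s}\,dx=1$, concentrates at the origin, so $\int_\Omega f\varphi_n\to 0$ and the quantity $(\int_\Omega f\varphi_n)^2/\big(\alpha B(\varphi_n,\varphi_n)-\lambda\big)$ is of type $0/0$; you have not shown it diverges. The alternative you mention (pairing against a first eigenfunction of $(-\Delta)^s-(\lambda/\alpha)|x|^{-2s}$) runs into the same concentration issue. In fact the ``naive'' weak-limit procedure you set aside is exactly what works, provided one uses the tool the paper relies on: if $\{u_\alpha\}$ stayed bounded in $H^s_0(\Omega)$ as $\alpha\downarrow\lambda/\Lambda_{N,s}$, any weak limit $u^*\in H^s_0(\Omega)$ would solve
\[
(-\Delta)^s u^*=\Lambda_{N,s}\,\dfrac{u^*}{|x|^{2s}}+\dfrac{\Lambda_{N,s}}{\lambda}\,f,
\]
and by \cite{AMPP} such a solution satisfies $u^*(x)\ge C|x|^{-(N-2s)/2}$ near $0$, hence $u^*\notin L^{2^*_s}(\Omega)$, contradicting $u^*\in H^s_0(\Omega)$. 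The same \cite{AMPP} ingredient is also needed for your uniqueness claim: before identifying an arbitrary solution $\bar u$ with some $u_{\alpha^*}$ you must know that $M(\|\bar u\|_{s,2}^2)>\lambda/\Lambda_{N,s}$ strictly, and this is precisely what the paper extracts from \cite{AMPP}.
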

We mention here the strong regularity enjoyed by the solution. In the case where $M\equiv 1$, such a regularity is false including for $f\in L^\infty(\O)$ where $\l$ is closed to the Hardy constant $\L_{N,s}$.

In the general case $q<2$, we get the existence of a solution for a large class of the datum $f$. More precisely we have
\begin{Theorem}
Let $\O\subset \ren$ be a bounded domain such that $0\in \O$ and $1<q<\frac{N}{N-s}$. Suppose that $f \in L^{1}(\Omega)$ is such that $\dyle\int_\Omega f |x|^{-\theta} dx<c$ for some positive constant $\theta$. Then for all $\l>0$, the problem $(\mathcal{P}_{s,q})$ has a minimal solution $\bar{u}$ such that $\bar{u}\in W^{s,\s}_0(\Omega)$ for all $\s<\frac{N}{N-s}$.
  \end{Theorem}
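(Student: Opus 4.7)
The plan is to obtain $\bar u$ as the limit of a monotone approximation scheme in which both the datum $f$ and the singular Hardy weight are regularized, while the non-local Kirchhoff coefficient is ``frozen'' at the previous iterate. The argument closes by means of a weighted a priori estimate that exploits assumption (M3) to break the resonance for arbitrary $\l>0$.

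\textbf{Step 1 (Regularization).} Set $f_n := T_n(f) \in L^\infty(\O)$ (truncation at level $n$) and replace the Hardy weight by $h_n(x) := (|x|^{2s} + \tfrac{1}{n})^{-1}$. Denote by $(\mathcal{P}_{s,q}^n)$ the resulting problem, whose coefficients are now bounded.

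\textbf{Step 2 (Inner iteration).} For each $n$, start from $v_{n,0}=0$ and define $v_{n,k+1}$ inductively as the unique positive solution of the (linear in the unknown) Dirichlet problem
\[
M\bigl([v_{n,k}]_{W^{s,q}}^{q}\bigr)(-\D)^{s}v_{n,k+1}=\l\,v_{n,k}\,h_{n}(x)+f_{n}(x)\ \text{in }\O,\qquad v_{n,k+1}=0\ \text{in }\ren\setminus\O.
\]
Since $M(\cdot)\ge m_{0}>0$ and the right-hand side is bounded, existence and uniqueness of $v_{n,k+1}$ follow from the standard theory for the fractional Laplacian with bounded data. The monotonicity $v_{n,k}\le v_{n,k+1}$ is then obtained from the fractional maximum principle, combined with (M1) to control the direction in which the Kirchhoff coefficient moves.

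\textbf{Step 3 (Uniform $W^{s,\s}_{0}$ estimate).} This is the heart of the proof. Independently of $n$ and $k$, I aim to bound the iterates in $W^{s,\s}_{0}(\O)$ for every $\s<N/(N-s)$. The hypothesis $\dyle\io f\,|x|^{-\theta}\,dx<c$ is tailored so that one can test the equation against (a smooth truncation of) $|x|^{-\theta}$; using the identity $(-\D)^{s}|x|^{-\g}=C_{N,s}(\g)\,|x|^{-\g-2s}$, this produces a weighted inequality relating $\io v\,|x|^{-\theta-2s}\,dx$, the Kirchhoff value $M([v]_{W^{s,q}}^{q})$, and $\l$. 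The key point is that (M3) forbids $[v_{n,k}]_{W^{s,q}}$ from blowing up: were it to diverge, the effective ratio $\l/M$ would vanish and the Hardy term could no longer dominate the coercive side of the inequality. Once $\io v\,|x|^{-2s}\,dx$ is controlled, the fractional $L^{1}$-data theory \`a la Stampacchia delivers the $W^{s,\s}_{0}$ bound; the restriction $1<q<N/(N-s)$ is exactly what makes the Kirchhoff seminorm finite in this same range of $\s$, so the scheme is self-consistent.

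\textbf{Step 4 (Passage to the limit and minimality).} By the uniform bound and the compactness of $W^{s,\s}_{0}\hookrightarrow L^{\s}$, a diagonal extraction yields a subsequence converging weakly in $W^{s,\s}_{0}$ and a.e.\ in $\O$ to some $\bar u\ge 0$. The continuity of $M$, together with Fatou-type arguments, allows the passage to the limit in every term, so that $\bar u$ solves $(\mathcal{P}_{s,q})$. Minimality is a direct consequence of the monotone iterative construction: any other positive solution $w$ satisfies $v_{n,k}\le w$ for every $n,k$ (shown inductively via the maximum principle), so it dominates $\bar u$ in the limit.

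\textbf{Main obstacle.} The decisive technical difficulty lies in Step 3: since $\l$ is not required to stay below the Hardy threshold $\L_{N,s}$, the classical subcritical arguments fail outright, and the whole mechanism of ``breaking of resonance'' must be quantified through the interplay of (M1) and (M3). The sensitive point is choosing the exponent $\theta$ in the test weight so that the fractional integration-by-parts identity for $(-\D)^{s}|x|^{-\theta}$ is compatible both with the Hardy singularity and with the growth rate of $M$; the explicit form of the condition $\io f\,|x|^{-\theta}\,dx<c$ is what makes this balancing possible.
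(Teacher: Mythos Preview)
Your overall architecture --- regularize, get a solution at each level $n$, derive a uniform a~priori bound by testing against a power weight that interacts with the integrability hypothesis on $f$, then pass to the limit via $L^{1}$-data compactness --- matches the paper's strategy closely. The decisive idea, that (M3) forces $\l/M(\|u_{n}\|_{s,q}^{q})$ to become subcritical whenever the seminorm threatens to blow up, is exactly what the paper uses. Two points, however, do not go through as written.

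\medskip
\textbf{The monotone iteration in Step 2 is not justified.} With the Kirchhoff coefficient frozen at $v_{n,k}$, the comparison you need for $v_{n,k+1}\ge v_{n,k}$ is
\[
\frac{\l\,v_{n,k}\,h_{n}+f_{n}}{M([v_{n,k}]_{s,q}^{q})}\ \ge\ \frac{\l\,v_{n,k-1}\,h_{n}+f_{n}}{M([v_{n,k-1}]_{s,q}^{q})}.
\]
Even granting the inductive hypothesis $v_{n,k-1}\le v_{n,k}$ pointwise, this says nothing about the ordering of the seminorms $[v_{n,k-1}]_{s,q}$ and $[v_{n,k}]_{s,q}$ (pointwise ordering does \emph{not} imply ordering of Gagliardo seminorms). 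And even if the seminorms were ordered, (M1) then gives $M([v_{n,k}]_{s,q}^{q})\ge M([v_{n,k-1}]_{s,q}^{q})$, so the factor $1/M$ moves the \emph{wrong} way: the $f_{n}$-contribution on the left is smaller than on the right, and there is no reason the Hardy term compensates. Your parenthetical ``combined with (M1) to control the direction'' is therefore the opposite of what actually happens. This also undermines the minimality argument in Step~4, which rests entirely on the monotone structure. The paper sidesteps this by solving the truncated problem $(\mathcal{P}^{n}_{s,q})$ directly via a fixed-point argument (Schauder on $W^{s,q}_{0}$, using the compactness Theorem~\ref{compa}), without any inner iteration; minimality is in fact not proved in the paper's argument.

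\medskip
\textbf{The test weight in Step 3 needs the boundary correction.} The identity $(-\D)^{s}|x|^{-\g}=C_{N,s}(\g)|x|^{-\g-2s}$ is only valid on $\ren$; on a bounded domain with Dirichlet condition you cannot test against $|x|^{-\theta}$ directly. The paper's device is to take as test function the solution $\varphi$ of
\[
(-\D)^{s}\varphi=\beta\,\frac{\varphi}{|x|^{2s}}+1\ \text{in }\O,\qquad \varphi=0\ \text{in }\ren\setminus\O,
\]
with $0<\beta<\L_{N,s}$ chosen small enough that the associated singularity exponent $\g_{\beta}$ satisfies $\g_{\beta}\le\theta$. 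Testing the equation for $u_{n}$ against $\varphi$ (by duality) then yields, after using $M(\|u_{n}\|_{s,q}^{q})\to\infty$ along the contradicting subsequence, the bound $\io \varphi\,u_{n}\,|x|^{-2s}\,dx\le C\io f\,|x|^{-\g_{\beta}}\,dx$, and hence the $L^{1}$ control on $u_{n}/|x|^{2s}$ that drives the rest of the argument. This auxiliary-problem trick is what makes the weighted estimate rigorous in the bounded-domain setting.
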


In the second part of the paper we treat the case where $f(x,u)=u_+^p+\mu g$. In this case and under suitable hypotheses on $\mu$ and $g$, we will show the existence of a solution without any restriction on $\l$.

This paper is organized as follow. In Section \ref{prim}, we give some results related to fractional Sobolev spaces and some functional inequalities. The sense for which solutions are defined is also presented, namely energy solutions and weak solutions.\\

The proofs of the main existence results are given in Section \ref{proofs}. We begin by the case of variational structure and at the end of this section, we use approximating argument to treat the case of non variational structure.

\section{Preliminaries and functional setting }\label{prim}
Let $\O\subset \ren$, for $s\in (0,1)$ and $1\le p<+\infty$, the fractional Sobolev
spaces $W^{s,p}(\Omega)$, are defined by
$$
W^{s,p}(\Omega)\equiv
\Big\{ \phi\in
L^p(\O):\int_{\O}\int_{\O}\frac{|\phi(x)-\phi(y)|^p}{|x-y|^{N+ps}}dxdy<+\infty\Big\}
$$
$W^{s,p}(\O)$ is a Banach space endowed with the following norm
$$
\|\phi\|_{W^{s,p}(\O)}=
\Big(\dint_{\O}|\phi(x)|^pdx\Big)^{\frac 1p}
+\Big(\dint_{\O}\dint_{\O}\frac{|\phi(x)-\phi(y)|^p}{|x-y|^{N+ps}}dxdy\Big)^{\frac
1p}.
$$
We define the space $W_0^{s,p}(\Omega)$ as
$$
W_0^{s,p}(\Omega) := \left\{ u \in W^{s,p}(\ren): u = 0 \textup{ in } \ren\setminus \Omega \right\}.
$$
If $\O$ is a bounded regular domain, then using Poincaré type inequality, we can endow $W^{s,p}_{0}(\O)$ with the equivalent norm
$$
\|u\|_{W^{s,p}_0(\Omega)} := \left( \iint_{D_{\Omega}} \frac{|u(x)-u(y)|^p}{|x-y|^{N+sp}} dx dy \right)^{1/p},
$$
where
$$
D_{\Omega} := (\ren\times \ren) \setminus (\mathcal{C}\Omega \times \mathcal{C}\Omega) = (\Omega \times \ren) \cup (\mathcal{C}\Omega \times \Omega).$$

The next Sobolev inequality is proved in \cite{DPV}.
\begin{Theorem} \label{Sobolev}(Fractional Sobolev inequality)
Assume that $0<s<1, p>1$ in ordrer to get $ps<N$. Then, there exists a positive constant $S\equiv S(N,s,p)$ such that for all
$v\in C_{0}^{\infty}(\ren)$,
$$
\iint_{\re^{2N}}
\dfrac{|v(x)-v(y)|^{p}}{|x-y|^{N+ps}}\,dxdy\geq S
\Big(\dint_{\mathbb{R}^{N}}|v(x)|^{p_{s}^{*}}dx\Big)^{\frac{p}{p^{*}_{s}}},
$$
where $p^{*}_{s}= \dfrac{pN}{N-ps}$.
\end{Theorem}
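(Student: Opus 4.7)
The plan is to follow the approach of Di Nezza--Palatucci--Valdinoci \cite{DPV}. By density of $C_0^\infty(\ren)$ in $W^{s,p}(\ren)$ and the elementary estimate $\big||v(x)|-|v(y)|\big|\le |v(x)-v(y)|$, one may reduce to nonnegative $v\in C_0^\infty(\ren)$. The strategy is to first establish the borderline case $p=1$ via a dyadic level-set decomposition, and then recover general $p>1$ by substituting a power of $v$.

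For $p=1$, set $A_k:=\{2^k<v\le 2^{k+1}\}$ and $D_k:=\{v>2^k\}$, so that $\|v\|_{L^{1^*_s}(\ren)}^{1^*_s}$ is comparable to $\sum_{k\in\Z}2^{k\cdot 1^*_s}|A_k|$. The geometric core of the argument is the fractional isoperimetric-type estimate
$$
|D_k|^{(N-s)/N}\le C\iint_{D_k\times D_k^c}\frac{dx\,dy}{|x-y|^{N+s}},
$$
which follows from a nonlocal relative isoperimetric inequality on balls. Pairs $(x,y)\in D_k\times D_{k-1}^c$ satisfy $v(x)-v(y)>2^{k-1}$, so the right-hand side above is controlled by a dyadic piece of the full Gagliardo seminorm $[v]_{W^{s,1}(\ren)}$. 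A discrete Hardy-type summation in $k\in\Z$ then yields $\|v\|_{L^{1^*_s}(\ren)}\le C[v]_{W^{s,1}(\ren)}$.

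For $p>1$, apply the just-proved $p=1$ inequality to $w:=v^{\gamma}$ with $\gamma:=\frac{p(N-s)}{N-ps}>1$ chosen so that $\gamma\cdot 1^*_s=p^*_s$; equivalently, $\|w\|_{L^{1^*_s}(\ren)}=\|v\|_{L^{p^*_s}(\ren)}^{\gamma}$. Using the elementary pointwise bound $|a^{\gamma}-b^{\gamma}|\le\gamma(a^{\gamma-1}+b^{\gamma-1})|a-b|$ for $a,b\ge 0$ together with H\"older's inequality with conjugate exponents $p$ and $p/(p-1)$ applied to the resulting double integral (splitting the weight $|x-y|^{-(N+s)}$ appropriately and using Fubini to produce an $L^{p^*_s}$-norm of $v^{\gamma-1}$), one checks after a short computation that $(\gamma-1)p/(p-1)=p^*_s$ and obtains
$$
[w]_{W^{s,1}(\ren)}\le C\,\|v\|_{L^{p^*_s}(\ren)}^{\gamma-1}\,[v]_{W^{s,p}(\ren)}.
$$
Combining this with the $p=1$ Sobolev inequality applied to $w$ and dividing both sides by the absorbable factor $\|v\|_{L^{p^*_s}}^{\gamma-1}$ yields the claimed estimate. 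The main obstacle is the geometric step in the $p=1$ case---establishing the nonlocal relative isoperimetric inequality and carrying out the dyadic summation without losing logarithmic factors; the passage from $p=1$ to $p>1$ is essentially algebraic, though the H\"older splitting must be arranged with care so that the integrated weight does not diverge.
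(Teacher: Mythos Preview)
The paper itself does not supply a proof of this theorem; it is quoted as a known result with a reference to \cite{DPV}. So there is no in-house argument to compare against, only the cited one. That said, your sketch diverges from the argument actually given in \cite{DPV}: there the dyadic level-set decomposition is carried out directly for general $p>1$ (their Lemmas 6.2--6.3 and Theorem 6.5), not by first proving the case $p=1$ and then bootstrapping via the substitution $w=v^{\gamma}$.

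The reason \cite{DPV} does not take your bootstrap route is that the H\"older step you describe cannot be closed. After the pointwise bound $|a^{\gamma}-b^{\gamma}|\le C(a^{\gamma-1}+b^{\gamma-1})|a-b|$, you must split the kernel $|x-y|^{-(N+s)}$ so that the factor carrying $|v(x)-v(y)|$ receives exactly $|x-y|^{-(N+ps)/p}$, in order to rebuild $[v]_{W^{s,p}}$. The complementary factor then carries $|x-y|^{-\alpha}$ with $\alpha p'=\big((N+s)-\tfrac{N+ps}{p}\big)p'=N$. Thus the weight left over for the $v^{\gamma-1}$ piece is $|x-y|^{-N}$, which is \emph{not} locally integrable in $\ren$, and the resulting double integral diverges regardless of how you invoke Fubini. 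This is not a matter of ``arranging the splitting with care''; the exponent count is forced, and the obstruction is genuine. (The analogous trick works for the local Sobolev inequality precisely because $|\nabla v^{\gamma}|\le \gamma v^{\gamma-1}|\nabla v|$ produces no singular kernel.)

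Your treatment of the $p=1$ case is in the right spirit and is essentially sound; the gap is entirely in the passage to $p>1$. To repair the argument you must either run the dyadic decomposition directly for general $p$ as in \cite{DPV}, or replace the bootstrap by a different mechanism (e.g.\ a Riesz-potential / Hardy--Littlewood--Sobolev argument).
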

In the particular case $p=2$, $H^{s}(\Omega):=W^{s,2}(\Omega)$ turns out to be a Hilbert spaces. If $\Omega=\mathbb{R}^N$, the Fourier transform provides yet an alternative definition, more precisely, the Plancherel identity leads to
$$
\int_{\mathbb{R}^N} |\xi|^{2s}|\mathcal{F}(u)(\xi)|^2d\xi={\frac{a_{N,s}}{2}}\int_{\mathbb{R}^N}\int_{\mathbb{R}^N}\frac{|u(x)-u(y)|^2}{|x-y|^{N+2s}}dxdy.
$$
The operator $(-\Delta)^s$  defined for
$u\in\mathcal{S}(\mathbb{R}^N)$ by $$(-\Delta)^s u :=a_{N,s}\mbox{ P.V. }\int_{\mathbb{R}^{N}}{\frac{u(x)-u(y)}{|x-y|^{N+2s}}\, dy},\, \,\qquad  s\in(0,1),$$ can be extended by density from $\mathcal{S}(\mathbb{R}^N)$ to $H^s(\mathbb{R}^N)$. In this way, the associated scalar product  can be reformulated as
\begin{equation*}\begin{split}
\langle u,v\rangle_{ H^s(\mathbb{R}^N)}&:=\langle (-\Delta)^s u, v\rangle+(u,v)\\
&:=
P.V.\iint_{\mathbb{R}^N\times \mathbb{R}^N}{\frac{(u(x)-u(y))(v(x)-v(y))}{|x-y|^{N+2s}}\,dx\,dy}+\int_{\mathbb{R}^N}uv\,dx.
\end{split}\end{equation*}

In the case of bounded domain if we denote by $H^{-s}(\Omega):=[H_0^s (\Omega)]^*$ the dual space
of $H_0^s (\Omega)$, then
$$(-\Delta)^s :H_0^s (\Omega)\rightarrow H^{-s}(\Omega),$$
is a continuous operator.

\

We state, by the next definition, the sense in which the solution is defined.
\begin{Definition}\label{defph4}
Assume that $h\in L^1(\O)$ and consider the problem
\begin{equation}\label{hho}
\left\{
\begin{array}{rcll}
(-\D)^{s} u &= & h & \text{ in }\Omega , \\
u & = & 0 & \text{  in }\ren\backslash \Omega , \\
\end{array}%
\right.
\end{equation}
we say that  $u\in L^1(\O)$ is a weak solution to \eqref{hho} if $u=0$ in $\ren\backslash \Omega$ and for all $\psi\in \mathbb{X}_s$, we have
$$
\io u((-\D)^{s}\psi) dx =\io h\psi dx.
$$
where
$$
\mathbb{X}_s\equiv \Big\{\psi\in \mathcal{C}(\ren)\,|\,\text{supp}(\psi)\subset \overline{\O},\,\, (-\Delta )^s\psi(x) \hbox{ pointwise defined and   } |(-\Delta )^s\psi(x)|<C \hbox{ in  } \O\Big\}.
$$
\end{Definition}
The next existence result is proved in \cite{LPPS}, \cite{CV1} and \cite{AAB}.
\begin{Theorem}\label{entropi}
Assume that $h\in L^1(\O)$, then problem \eqref{hho}  has a unique weak  solution $u$ obtained as the limit of the sequence $\{u_n\}_{n\in \mathbb{N}}$, where $u_n$ is the unique solution of the approximating problem
\begin{equation}\label{proOO}
\left\{\begin{array}{rcll} (-\Delta)^s u_n &= & h_n(x) & \mbox{  in  }\O,\\ u_n &= & 0 & \mbox{ in } \ren\backslash\O,
\end{array}
\right.
\end{equation}
with $h_n=T_n(h)$ and $T_n(\s)=\max(-n, \min(n,\s))$. Moreover,
\begin{equation} \label{tku}
T_k(u_n)\to T_k(u)\hbox{  strongly in }   H^{s}_{0}(\Omega), \quad \forall k > 0,
\end{equation}
\begin{equation} \label{L1u}
u \in L^\theta(\Omega) \,, \qquad  \forall  \ \theta\in \big[1, \frac{N}{N-2s}\big)\,
\end{equation}
 and
\begin{equation}\label{L1du}
\big|(-\Delta)^{\frac{s}{2}}   u\big| \in L^r(\Omega) \,, \qquad \forall  \  r \in \big[1,  \frac{N}{N-s} \big) \,.
\end{equation}
Furthermore
\begin{equation}\label{L1duu}
u_n\to u\mbox{  strongly in } W^{s,q_1}_0(\O)\mbox{  for all   }q_1<\frac{N}{N-s}.
\end{equation}
\end{Theorem}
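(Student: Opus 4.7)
The plan is to adapt the classical Boccardo--Gallou\"et strategy for elliptic equations with $L^1$ data to the nonlocal fractional setting. First, since $h_n = T_n(h) \in L^\infty(\Omega) \subset H^{-s}(\Omega)$, Lax--Milgram applied in $H^s_0(\Omega)$ (equipped with the Gagliardo norm) yields a unique energy solution $u_n$ of \eqref{proOO}. To obtain $k$-uniform estimates, I would test with $T_k(u_n) \in H^s_0(\Omega)$ and use the pointwise monotonicity inequality $(a-b)(T_k(a)-T_k(b)) \geq |T_k(a)-T_k(b)|^2$ inside the Gagliardo double integral, giving
\begin{equation*}
\|T_k(u_n)\|_{H^s_0(\Omega)}^2 \leq C\,k\,\|h\|_{L^1(\Omega)}.
\end{equation*}

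Second, combining this with the fractional Sobolev inequality of Theorem \ref{Sobolev} applied to $T_k(u_n)$ and Chebyshev's inequality yields the Marcinkiewicz estimate $\meas\{|u_n|>k\} \leq C k^{-N/(N-2s)}$, so $\{u_n\}$ is bounded in $L^\theta(\Omega)$ for every $\theta < N/(N-2s)$, proving \eqref{L1u}. A level-set decomposition applied to $(-\Delta)^{s/2}u_n$, together with the energy bound above, delivers the Marcinkiewicz estimate $\|(-\Delta)^{s/2}u_n\|_{\mathcal{M}^{N/(N-s)}} \leq C$, hence $L^r$-boundedness for $r < N/(N-s)$, giving \eqref{L1du} and a uniform bound in $W^{s,q_1}_0(\Omega)$ for every $q_1 < N/(N-s)$.

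Third, up to a subsequence $u_n \rightharpoonup u$ weakly in $W^{s,q_1}_0(\Omega)$ and a.e.\ in $\Omega$. For any $\psi \in \mathbb{X}_s$, the identity $\io u_n (-\Delta)^s\psi\,dx = \io h_n\psi\,dx$ holds, and both sides pass to the limit by dominated convergence (the left integrand being controlled by $\|(-\Delta)^s\psi\|_\infty |u_n|$ with $\{u_n\}$ bounded in $L^1(\Omega)$, and $h_n\to h$ in $L^1(\Omega)$), so $u$ satisfies the weak formulation of Definition \ref{defph4}. Uniqueness then follows by duality: if $w$ is the difference of two weak solutions, $\io w(-\Delta)^s\psi\,dx = 0$ for every $\psi\in\mathbb{X}_s$, and solving $(-\Delta)^s\psi = \phi$ for arbitrary $\phi \in C^\infty_c(\Omega)$ (the regularity needed to place $\psi$ in $\mathbb{X}_s$ being standard for bounded right-hand side) forces $w \equiv 0$.

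Finally, to obtain the strong convergences \eqref{tku} and \eqref{L1duu}, I would test the difference of the equations satisfied by $u_n$ and $u_m$ with $T_k(u_n-u_m)$; the monotonicity inequality then gives $\|T_k(u_n-u_m)\|_{H^s_0(\Omega)}^2 \leq C k\|h_n-h_m\|_{L^1(\Omega)} \to 0$, which together with a.e.\ convergence yields \eqref{tku}. For \eqref{L1duu}, the Marcinkiewicz bound on $(-\Delta)^{s/2}u_n$ provides equi-integrability in $L^r$, and Vitali's convergence theorem upgrades weak to strong convergence in $W^{s,q_1}_0(\Omega)$. The main obstacle I anticipate is precisely this last step: the absence of a pointwise chain rule in the nonlocal setting forces one to carefully control the discrepancy between $T_k(u_n-u_m)$ and $T_k(u_n)-T_k(u_m)$, which is achieved by splitting the domain $D_\Omega$ according to the relative level sets of $u_n$ and $u_m$, a nonlocal refinement developed in \cite{LPPS, CV1, AAB}.
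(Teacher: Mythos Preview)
The paper does not actually prove this theorem: it is stated as a known result and attributed to the references \cite{LPPS}, \cite{CV1}, and \cite{AAB}, with no argument given beyond the citation. Your outline is a faithful sketch of the Boccardo--Gallou\"et strategy as adapted to the fractional Laplacian in precisely those references, and you correctly identify the nonlocal-specific difficulty (the lack of a pointwise chain rule for $T_k$) and the sources that resolve it, so there is nothing to compare against and your proposal is appropriate.
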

As a subsequent of this existence result, we get a compactness result which will be systematically used in the sequel.

\begin{Theorem}\label{compa}
Assume that $1<q<\frac{N}{N-s}$ be fixed and consider the operator $D: L^1(\O)\to W^{s,q}_0(\O)$ defined by $D(h)=u$ where $u$ is the unique solution to the following problem
\begin{equation*}
\left\{\begin{array}{rcll} (-\Delta)^s u &= & h(x) & \mbox{  in  }\O,\\ u &= & 0 & \mbox{ in } \ren\backslash\O,
\end{array}
\right.
\end{equation*}
then $D$ is a compact operator, moreover, there exists a positive constant $C\equiv C(\O, s, N,q)$ such that
$$
||u||_{W^{s,q}_0(\O)}\le C||h||_{L^1(\O)}.
$$
\end{Theorem}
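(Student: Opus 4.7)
My approach splits into two parts: establishing the linear continuity bound $\|D(h)\|_{W^{s,q}_0(\Omega)}\le C\|h\|_{L^1(\Omega)}$, and then upgrading it to compactness via a Schauder duality argument that exploits the precise threshold $q<N/(N-s)$.

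\emph{A priori bound.} For $h\in L^1(\Omega)$ and $u=D(h)$, I will invoke the Marcinkiewicz-type estimate underlying the proof of Theorem \ref{entropi}, which places $|(-\Delta)^{s/2}u|$ in $L^{N/(N-s),\infty}(\Omega)$ with weak-type norm controlled by $\|h\|_{L^1(\Omega)}$. Since $\Omega$ is bounded and $q<N/(N-s)$, the Lorentz inclusion $L^{N/(N-s),\infty}(\Omega)\hookrightarrow L^q(\Omega)$ combined with the norm equivalence $\|v\|_{W^{s,q}_0(\Omega)}\approx\|(-\Delta)^{s/2}v\|_{L^q(\Omega)}$ (valid for all $q\in(1,\infty)$) will produce the desired estimate. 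This shows that $D:L^1(\Omega)\to W^{s,q}_0(\Omega)$ is well defined, linear, and bounded.

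\emph{Compactness via duality.} Setting $q':=q/(q-1)$, I note the key numerical equivalence
$$
q<\tfrac{N}{N-s}\;\Longleftrightarrow\;sq'>N,
$$
which is exactly the Morrey threshold for the continuous embedding $W^{s,q'}_0(\Omega)\hookrightarrow C^{0,s-N/q'}(\overline{\Omega})$. The adjoint operator reads $D^*:W^{-s,q'}(\Omega)\to L^\infty(\Omega)$, and by self-adjointness of $(-\Delta)^s$ in the pairing $\int_\Omega u\,\phi=\int_\Omega v\,h$ with $u=D(h)$ and $v$ the unique $W^{s,q'}_0$-solution of $(-\Delta)^s v=\phi$ in $\Omega$, $v=0$ in $\mathbb{R}^N\setminus\Omega$, I can identify $D^*\phi=(-\Delta)^{-s}\phi$. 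Composing the bounded inverse $(-\Delta)^{-s}:W^{-s,q'}(\Omega)\to W^{s,q'}_0(\Omega)$ with the Morrey embedding and with the compact inclusion $C^{0,s-N/q'}(\overline{\Omega})\hookrightarrow\hookrightarrow C^0(\overline{\Omega})\hookrightarrow L^\infty(\Omega)$ provided by Arzel\`a--Ascoli, I obtain that $D^*$ is compact. Schauder's theorem then yields compactness of $D$.

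\emph{Main obstacle.} The argument hinges on two ingredients: the numerical coincidence between the hypothesis $q<N/(N-s)$ and the Morrey threshold $sq'>N$ for the conjugate exponent, which is what makes the duality route available in the first place, and the self-adjoint identification of $D^*$ in the $L^1/L^\infty$ pairing, which requires care to set up consistently with the weak-solution framework of Definition \ref{defph4} and Theorem \ref{entropi} (one approximates $h$ and $\phi$ by smooth data and passes to the limit). Once the adjoint is correctly identified, compactness follows automatically from composing a bounded linear map with a compact embedding; without the favorable index coincidence one would be forced into a more delicate direct Vitali / equi-integrability argument on the Gagliardo difference quotients.
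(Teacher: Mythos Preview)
There is a genuine gap in your a priori bound. The claimed equivalence $\|v\|_{W^{s,q}_0(\Omega)}\approx\|(-\Delta)^{s/2}v\|_{L^q(\Omega)}$ is \emph{false} for $q\neq 2$: the Gagliardo--Slobodeckij space $W^{s,q}=B^s_{q,q}$ and the Bessel potential space $H^{s,q}=F^s_{q,2}$ coincide only at $q=2$. For $q<2$ (and here $q<\tfrac{N}{N-s}<2$ since $N>2s$) the embedding runs $W^{s,q}\hookrightarrow H^{s,q}$, \emph{not} the reverse. Hence controlling $\|(-\Delta)^{s/2}u\|_{L^q}$ through a Marcinkiewicz estimate does not yield control of $\|u\|_{W^{s,q}_0}$. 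This is precisely why Theorem~\ref{entropi} records \eqref{L1du} and \eqref{L1duu} as separate conclusions: the Gagliardo bound has to be obtained directly, by truncation arguments applied to the difference quotients $|u(x)-u(y)|^q/|x-y|^{N+qs}$ themselves (splitting over level sets of $u$), rather than inherited from a Bessel-potential estimate.

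Your duality route for compactness is an attractive observation---the numerical equivalence $q<\tfrac{N}{N-s}\Longleftrightarrow sq'>N$ is correct and does place the adjoint in the Morrey regime---but it also rests on an ingredient you have not justified: the boundedness of $(-\Delta)^{-s}:W^{-s,q'}(\Omega)\to W^{s,q'}_0(\Omega)$. This is an $L^{q'}$ Calder\'on--Zygmund type statement for the fractional Dirichlet Laplacian on a bounded domain, which is nontrivial and nowhere established in the paper. Without it you cannot compose with the Morrey embedding to reach $C^{0,s-N/q'}(\overline\Omega)$.

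For comparison, the paper does not give a self-contained proof of Theorem~\ref{compa}; it is stated as a direct consequence of Theorem~\ref{entropi} and the references \cite{LPPS}, \cite{CV1}, \cite{AAB}, where both the continuity estimate and the compactness are obtained exactly by the ``more delicate direct Vitali/equi-integrability argument on the Gagliardo difference quotients'' that you set aside in your final paragraph. That direct route is in fact the one that works here without importing external regularity theory.
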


\

Since we are considering problem involoving the Hardy potential, we have tospecify the behavior of any solution in the neighborhood of the singular point $0$.

For $\l\le \L_{N,s}$ fixed, we define $\a>0$ by the identity
\begin{equation}\label{lambda}
\lambda=\lambda(\alpha)=\lambda(-\alpha)=\dfrac{2^{2s}\,\Gamma(\frac{N+2s+2\alpha}{4})\Gamma(\frac{N+2s-2\alpha}{4})}{\Gamma(\frac{N-2s+2\alpha}{4})\Gamma(\frac{N-2s-2\alpha}{4})}.
\end{equation}
Denote
\begin{equation}\label{g1}
\gamma:= \dfrac{N-2s}{2}-\alpha,
\end{equation}
then $0<\gamma\leq \dfrac{N-2s}{2}$ and $|x|^{-\gamma}$ is the unique solution to the equation
$$
(-\Delta)^{s} u=\l\frac{u}{|x|^{2s}},
$$
with $(-\Delta)^{s/2}(|x|^{-\g})\in L^2_{loc}(\R^N)$.
Form \cite{AMPP} we have the next result.
\begin{Theorem}
Consider the problem
\begin{equation}\label{jj}
\left\{
\begin{array}{rcll}
(-\Delta) ^{s}u & = & \l\dfrac{u}{|x|^{2s}}+f(x) &\text{ in } \Omega, \\
u &> & 0 & \text{ in } \Omega,\\
u& = & 0 &\text{ in } \mathbb{R}^{N}\setminus \Omega,
\end{array}%
\right.
\end{equation}
where $\Omega \subset \ren$ is a bounded domain containing the origin, $s\in (0, 1)$ with $N>2s$ and $0<\l\le \L_{N,s}$. Then problem \eqref{jj} has a nonnegative solution if and only if $\dyle\io f|x|^{-\g}dx <\infty$. In this case the unique solution of \eqref{jj} satisfies $u(x)\ge C|x|^{-\g}$ in $B_{r}(0)$.
\end{Theorem}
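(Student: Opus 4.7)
The statement is a biconditional, so the plan is to handle the two directions separately and then derive the lower bound by a comparison argument. The unifying fact is that $|x|^{-\g}$ satisfies $(-\Delta)^s(|x|^{-\g})=\l\,|x|^{-\g-2s}$ in $\ren\setminus\{0\}$, which encodes the entire relation \eqref{lambda}--\eqref{g1} between $\l$, $\g$ and the Hardy constant and will drive both implications.

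For the necessity of $\io f|x|^{-\g}dx<\infty$, I would assume a nonnegative weak solution $u$ of \eqref{jj} exists and test against a smooth truncation of $|x|^{-\g}$. Concretely, with cutoffs $\eta_\eps\in \ci(\ren\setminus\{0\})$ supported in $B_{2/\eps}\setminus B_{\eps/2}$ and equal to $1$ on $B_{1/\eps}\setminus B_\eps$, set $\phi_\eps:=|x|^{-\g}\eta_\eps$. One checks that $\phi_\eps\in \mathbb{X}_s$ and is admissible in Definition \ref{defph4}. Writing $(-\Delta)^s\phi_\eps=\eta_\eps(-\Delta)^s(|x|^{-\g})+R_\eps(x)=\l\,|x|^{-\g-2s}\eta_\eps+R_\eps(x)$, the Hardy term on the left cancels the principal part on the right, leaving $\io f\,\phi_\eps\,dx=\io u\,R_\eps\,dx$. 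The task then reduces to bounding $|\io u R_\eps dx|$ uniformly in $\eps$ by using the singular-integral definition of $(-\Delta)^s$, the bound $\g<(N-2s)/2$, and the local behavior of $u$ near $0$ and $\p\O$. Fatou's lemma applied as $\eps\to 0$ then forces $\io f|x|^{-\g}dx<\infty$.

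For sufficiency I would construct the solution by a double approximation: regularize the Hardy potential as $V_n(x):=1/(|x|^{2s}+1/n)$, the datum as $f_n:=T_n(f)$, and, in the borderline case $\l=\L_{N,s}$, replace $\l$ by $\l_n:=\l-1/n$. For $\l_n<\L_{N,s}$ the bilinear form $\langle\cdot,\cdot\rangle_{H_0^s}-\l_n\io V_n(\cdot)(\cdot)dx$ is coercive by \eqref{hardy}, so Lax--Milgram provides a unique nonnegative $u_n\in H_0^s(\O)$. The key a priori bound is again obtained by testing against a cutoff of $|x|^{-\g}$: it yields $\io u_n|x|^{-\g-2s}dx\le C\io f|x|^{-\g}dx$, which controls $h_n:=\l_n V_n u_n+f_n$ in $L^1(\O)$ uniformly in $n$. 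Theorem \ref{compa} then provides compactness of $\{u_n\}$ in $W^{s,\s}_0(\O)$ for every $\s<N/(N-s)$, and passage to the limit yields a weak solution in the sense of Definition \ref{defph4}; uniqueness follows from a Kato-type comparison principle for $(-\Delta)^s$ combined once more with \eqref{hardy}.

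The pointwise bound $u(x)\ge C|x|^{-\g}$ in $B_r(0)$ follows from the weak comparison principle: $u$ is a nonnegative supersolution of the homogeneous Hardy equation in a small ball around $0$, while $c|x|^{-\g}$ is a subsolution, and their boundary values can be matched on $\p B_r$ after using a Harnack-type lower bound for $u$ away from the origin. The main obstacle I foresee is the commutator estimate on $R_\eps$ in the necessity step: since $(-\Delta)^s$ does not satisfy a Leibniz rule, $R_\eps$ must be bounded directly from the principal-value definition, splitting the singular kernel into contributions from near $0$, from the support of $\n\eta_\eps$, and from infinity, and checking that the resulting terms pass to the limit in a way that preserves the sign of $\io f\phi_\eps$.
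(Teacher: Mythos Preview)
The paper does not prove this theorem; it is stated in Section~\ref{prim} as a preliminary result quoted from \cite{AMPP}, and no argument is given in the present paper. There is therefore nothing here to compare your proposal against.

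For what it is worth, your outline follows the natural strategy for results of this type: duality with a truncation of the homogeneous solution $|x|^{-\gamma}$ to extract the integrability condition on $f$, approximation with regularized potentials and truncated data together with the $L^1$ compactness of Theorem~\ref{compa} for existence, and comparison for the pointwise lower bound near the origin. The commutator term $R_\eps$ you flag is indeed the real technical issue in the necessity step; one alternative to the direct principal-value splitting you describe is to work through the ground-state representation associated with $|x|^{-\gamma}$, which avoids a Leibniz-type estimate altogether.
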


In what follows, $C$ denotes any positive constant that does not depend on the solution and that can change from line to other.

\section{Existence results in the case $f(x,u):=f(x)$.}\label{proofs}

In what follows and for the sake of simplicity we set
$$
||u||_{s,q}^q=\iint_{D_\O} \dfrac{|u(x)-u(y)|^{q}}{|x-y|^{N+qs}}\,dxdy,
$$
where
$$
D_{\Omega} := (\ren\times \ren) \setminus (\mathcal{C}\Omega \times \mathcal{C}\Omega).$$
Along this section we will assume that the function $M$ satisfies the conditions $(M1), (M2)$ stated in the introduction.

Recall that we are considering the next problem
\begin{equation*}{(\mathcal{P}_{s,2})}
\left\{
\begin{array}{rcll}
M(||u||_{s,2}^{2})(-\Delta) ^{s}u & = & \l\dfrac{u}{|x|^{2s}}+f(x) & \text{ in } \Omega, \\
u & > & 0  & \text{ in } \Omega,\\
u & = & 0 & \text{ in } \mathbb{R}^{N}\setminus \Omega.
\end{array}%
\right.
\end{equation*}
Our first existence result consists in the following.
\begin{Theorem}\label{thf1}
Assume that $f \in L^{2}(\Omega)$ is such that $f\gneqq 0$, then for all $\lambda>0$, the problem $(\mathcal{P}_{s,2})$ has a unique positive solution $\bar{u}$ in $H_{0}^{s}(\Omega)$.
\end{Theorem}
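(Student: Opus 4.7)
The plan is to reduce the nonlinear Kirchhoff problem to a one-parameter family of linear Hardy-type problems by freezing $M(\|u\|_{s,2}^2)=M(t)$, and then solve the resulting scalar equation $\|u_t\|_{s,2}^2=t$ by an intermediate value argument. The growth assumption (M3) (implicitly needed: a bounded $M$ could not possibly break the Hardy resonance for every $\l$) is what drives the argument.

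\textbf{Step 1: the linearized problem.} For each fixed $t\ge 0$, consider
\begin{equation*}
(L_t): \qquad M(t)(-\Delta)^s u - \l\frac{u}{|x|^{2s}} = f \quad\text{in }\O,\qquad u=0\text{ in }\ren\setminus\O.
\end{equation*}
By the Hardy inequality \eqref{hardy}, the bilinear form on $H^s_0(\O)$ associated with $M(t)(-\Delta)^s-\l|x|^{-2s}$ is continuous and coercive precisely when $M(t)>\l/\L_{N,s}$. Using (M1)--(M3) and the continuity of $M$, define $\t\ge 0$ as the unique value with $M(\t)=\l/\L_{N,s}$ if $m_0\le \l/\L_{N,s}$, and $\t=0$ otherwise. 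For every $t>\t$, Lax--Milgram delivers a unique $u_t\in H^s_0(\O)$; since $f\gneqq 0$, the weak maximum principle for the Hardy-type operator (available below resonance by testing against $u_t^-$ and using Hardy) yields $u_t>0$ in $\O$.

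\textbf{Step 2: fixed point by IVT.} Set $\phi(t):=\|u_t\|_{s,2}^{2}$. Any solution of $(\mathcal{P}_{s,2})$ is exactly $u_{t_0}$ for some $t_0$ with $\phi(t_0)=t_0$. Continuity of $\phi$ follows from standard Lax--Milgram stability together with continuity of $M$. For the asymptotics, testing $(L_t)$ with $u_t$ and combining with Hardy gives $(M(t)-\l/\L_{N,s})\|u_t\|_{s,2}^2\le C\|f\|_{L^2}\|u_t\|_{L^2}$, so (M3) forces $\phi(t)\to 0$ as $t\to+\infty$; conversely, if $\t>0$, the coercivity constant of $(L_t)$ vanishes as $t\to\t^+$, and plugging almost--extremal sequences of the Hardy quotient into the duality pairing against $f$ produces $\phi(t)\to+\infty$. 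Hence $G(t):=\phi(t)-t$ is continuous on $(\t,\infty)$ with $G\to+\infty$ at $\t^+$ (or $G(0)=\phi(0)>0$ if $\t=0$) and $G\to-\infty$ at $+\infty$; IVT provides the desired $t_0>\t$, and $\bar u:=u_{t_0}\in H^s_0(\O)$ is the sought positive solution.

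\textbf{Step 3: uniqueness.} Let $u_1,u_2$ be two positive energy solutions with $a_i:=M(\|u_i\|_{s,2}^2)$. A preliminary observation is that any such solution must satisfy $a_i>\l/\L_{N,s}$, a non-trivial consequence of Hardy, non-attainment of $\L_{N,s}$ in $H^s_0(\O)$, and $\int f u_i>0$. Assuming WLOG $a_1\le a_2$ and setting $L_a:=(-\Delta)^s-(\l/a)|x|^{-2s}$, dividing each equation by $a_i$ and subtracting gives
\begin{equation*}
L_{a_1}(u_1-u_2) = \Big(\tfrac{1}{a_1}-\tfrac{1}{a_2}\Big)\Big(f+\l\tfrac{u_2}{|x|^{2s}}\Big) \ge 0.
\end{equation*}
The weak maximum principle for $L_{a_1}$, valid since $a_1>\l/\L_{N,s}$, forces $u_1\ge u_2$, hence $\|u_1\|_{s,2}\ge\|u_2\|_{s,2}$ and by (M1) $a_1\ge a_2$. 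Therefore $a_1=a_2$, so $u_1,u_2$ solve the same linear problem $(L_{a_1})$, and its uniqueness gives $u_1=u_2$.

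\textbf{Main obstacle.} I expect two technical difficulties. First, the blow-up $\phi(t)\to+\infty$ as $t\to\t^+$ needs a quantitative construction of admissible test functions that approach the Hardy extremal profile $|x|^{-\g}$ while remaining in $H^s_0(\O)$; the non-attainment of $\L_{N,s}$ makes this delicate. Second, and more importantly, the preliminary claim that every positive energy solution satisfies $M(\|u\|_{s,2}^2)>\l/\L_{N,s}$ is crucial for both the comparison principle and uniqueness, and it is not automatic: a direct energy test only produces a non-strict limiting inequality. Closing this gap--either through a sharp Hardy-type improvement valid on $H^s_0(\O)$ or through a deeper variational characterization of the solution found in Step 2--is the heart of the proof.
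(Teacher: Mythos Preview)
Your strategy---freezing $M(t)$, solving the linear Hardy problem $(L_t)$, and locating a fixed point of $\phi(t)=\|u_t\|^2_{s,2}$ by the intermediate value theorem---is a genuinely different route from the paper's, which instead truncates both the Hardy weight and the solution, builds approximants $u_n$ by direct minimization, obtains an a priori bound $\|u_n\|_{s,2}\le C$ by contradiction via (M3), and then passes to the limit after proving strong $H^s_0$ convergence. Your approach is conceptually cleaner but hinges on exactly the two obstacles you flag.

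Both obstacles are closed in the paper by one external ingredient: the local lower bound from \cite{AMPP}. Any nonnegative solution of $(-\Delta)^s w=\L_{N,s}\,w|x|^{-2s}+g$ with $g\gneqq 0$ satisfies $w(x)\ge C|x|^{-(N-2s)/2}$ near the origin, forcing $\int_{B_\eta(0)} w^{2^*_s}=\infty$ and contradicting $w\in H^s_0(\O)$. This single observation (i) rules out $M(\|u\|^2_{s,2})=\l/\L_{N,s}$ for any energy solution, settling your ``preliminary claim'', and (ii) via a bounded-subsequence/weak-limit argument, gives $\phi(t)\to\infty$ as $t\to\t^+$: if $\|u_t\|_{s,2}$ stayed bounded along a sequence $t_k\downarrow\t$, the weak limit would be an $H^s_0$ solution at criticality. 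So the missing ``heart'' is this pointwise blow-up at criticality; no improved Hardy inequality or almost-extremal construction is needed.

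There is, however, an unflagged gap in your Step~3: from $u_1\ge u_2\ge 0$ you write ``hence $\|u_1\|_{s,2}\ge\|u_2\|_{s,2}$'', but pointwise ordering does \emph{not} imply ordering of Gagliardo seminorms. The paper circumvents this by testing each equation with its own solution: $u_1\ge u_2>0$ gives $\int_\O u_1^2|x|^{-2s}\ge\int_\O u_2^2|x|^{-2s}$ and $\int_\O fu_1\ge\int_\O fu_2$, hence
\[
M(\|u_1\|^2_{s,2})\|u_1\|^2_{s,2}=\l\int_\O\frac{u_1^2}{|x|^{2s}}+\int_\O fu_1\ \ge\ \l\int_\O\frac{u_2^2}{|x|^{2s}}+\int_\O fu_2=M(\|u_2\|^2_{s,2})\|u_2\|^2_{s,2},
\]
and strict monotonicity of $t\mapsto M(t)t$ then yields $\|u_1\|^2_{s,2}\ge\|u_2\|^2_{s,2}$, hence $a_1\ge a_2$.
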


\begin{pf} Let $f \in L^{2}(\Omega)$ and fix $\l>0$, we will proceed by approximation.

Consider the truncated problem
\begin{equation*}{(\mathcal{P}^n_s)}
\left\{
\begin{array}{rcll}
M(||u_n||_{s,2}^{2})(-\Delta) ^{s}u_{n} & = & \dfrac{\lambda T_{n}(u_{n}^{+})}{|x|^{2s}+\frac 1n}+f(x) &\text{ in } \Omega, \\
u_{n} & > & 0  & \text{ in } \Omega,  \\
u_{n} & = & 0 & \text{ in } \mathbb{R}^{N}\setminus \Omega,
\end{array}
\right.
\end{equation*}
It is clear that solutions of problem $(\mathcal{P}^n_s)$ are critical points of the functional $J_n$ given by
$$J_{n}(v)=\dfrac{1}{2}\widehat{M}(\left\Vert v\right\Vert_s^{2}) -\lambda \int_\Omega \dfrac{\widehat{T}_{n}(v^{+})}{\left\vert x\right\vert ^{2s}+\frac 1n}dx -\int_\Omega fvdx, $$
where $$ \widehat{M}(t)=\int^t_0 M(\tau)d\tau, \qquad \widehat{T}_{n}(t)=\int^t_0 T_n(\tau)d\tau. $$
Taking into consideration the behavior of $M$ and setting
$$
D_n=\inf_{H_0^s(\Omega)\backslash \{0\}}J_n(u),
$$
we reach that $D_n$ is achieved and then we get the existence of $u_{n}\in H_0^s(\Omega)$ that solves $(\mathcal{P}^n_s)$.
We claim that the sequence $\{u_n\}_n$ is bounded in $H_0^s(\Omega)$. We argue by contradiction. Assume that $||u_n||_{s,2}\to \infty$ as $n\to \infty$.
Taking $u_n$ as a test function in $(\mathcal{P}^n_s)$ and using Hardy and H\"{o}lder inequalities, it follow that
\begin{equation}\label{sati1}
\begin{array}{ll}
M(||u_{n}||_{s,2}^{2})||u_{n}||_{s,2}^{2}\leq
\lambda \displaystyle \int_\Omega \dfrac{\left\vert u_{n}\right\vert ^{2}}{\left\vert
x\right\vert ^{2s}}dx+\displaystyle \int_\Omega fu_ndx \\
 \leq \dfrac{\lambda }{\Lambda _{N,s}}\left\Vert u_{n}\right\Vert^2
_{s,2}+\left\Vert f\right\Vert_{L^2 (\Omega)}||u||_{s,2}.
\end{array}
\end{equation}
Thus
\begin{equation}
||u||^2_{s,2} \left( M\left(||u||^2_{s,2}\right)-\frac{\lambda}{\Lambda_{N,s}}\right) \leq 2^{-1}\left(||u||^2_{s,2}+ \left\Vert f\right\Vert ^{2}_{L^2 (\Omega)}\right),
\end{equation}
from which we have,
\begin{equation}\label{esti0}
2||u_{n}||^2_{s,2}( M(||u||^2_{s,2}) -\frac{\lambda}{\Lambda_{N,s}}-1) \leq \left\Vert f\right\Vert ^{2}_{L^2 (\Omega)}.
\end{equation}
By $(M2)$, we deduce that
$$
 M(||u_{n}||^{2}_{s,2}) -\frac{\lambda}{\Lambda_{N,s}}-1\to \infty\mbox{  as  }n\to \infty.
 $$
Hence we reach a contradiction with \eqref{esti0}. Thus we conclude that $\{u_n\}_n$ is bounded in $H_0^s(\Omega)$ and the claim follows.

Therefore, we get the existence of $\bar{u}\in H_0^s(\Omega)$ such that, up to a subsequence, $u_n\rightharpoonup \bar{u}$ weakly in $H_0^s(\Omega)$, and $u_n\longrightarrow \bar{u}$ strongly in $L^q(\Omega)$ for all $1\leq q <2^*_s$.

We claim that there exists a positive constant $\rho$ such that up to a subsequence denoted by $u_n$, we have
\begin{equation}\label{gamma}
\frac{\lambda}{M(||u_{n}||_{s,2}^{2})}<\Lambda_{N,s}-\rho \mbox{  for all  }n.
\end{equation}
By virtue of $(M1)$ and since $||u||_{s,2}\leq C$,  we obtain,
\begin{equation*}
\liminf_{n\rightarrow \infty} M(||u||_{s,2}^{2}):=\mu>0.
\end{equation*}
Therefore, we get the existence of a subsequence denoted by $\{u_{n}\}_n$ such that,
$$M(||u_n||_{s,2}^{2})\rightarrow \mu\mbox{  as  }n\to \infty. $$
Obviously, $u_{n}\rightharpoonup \bar{u}$ weakly in $H_0^s(\Omega)$, thus by passing to the limit in the problem of $u_{n}$, it follows that $\bar{u}$ solves
\begin{equation*}
\mu (-\Delta)^{s}\bar{u} = \l \frac{\bar{u}}{\left|x\right|^{2s}}+f.
\end{equation*}
Thus $\bar{u}\in H_0^s(\Omega)$, satisfies $\bar{u}\ge 0$ in $\O$ and
$$
(-\Delta) ^{s}\bar{u}=\dfrac{\lambda}{\mu}\dfrac{ \bar{u}}{|x|^{2s}}+\dfrac{f(x)}{\mu} \text{ in } \Omega.
$$
From the result of \cite{AMPP}, we conclude that $\lambda/\mu\le \Lambda_{N,s}$.

\

Let us show that $\lambda/\mu<\Lambda_{N,s}-\rho$ for some positive constant $\rho>0$.

If $\lambda/\mu = \Lambda_{N,s}$, then $\bar{u}$ is solution of the equation
\begin{equation*}
	(-\Delta) ^{s}\bar{u} =\Lambda_{N,s}\dfrac{ \bar{u}}{
		\left\vert x\right\vert ^{2s}}+\dfrac{f(x)}{\mu} \quad \text{in} \hspace{0,15cm} \Omega,
\end{equation*}
Using again the result of \cite{AMPP}, one can see that $\bar{u}(x)\ge C|x|^{-\frac{N-2s}{2}}$ in a small ball $B_\eta(0)$. Thus
$$
\int_{B_\eta(0)}\bar{u}^{2^*_s}dx\ge C\int_{B_\eta(0)}|x|^{-N} dx=\infty,
$$
which is a contradiction with the fact that $\bar{u}\in H_0^s(\Omega)$. Hence we get the existence of a positive constant $\rho$ such that
\begin{equation}\label{estim1}
\frac{\lambda}{M(||u_{n}||_{s,2}^2)}<\Lambda_{N,s}-\rho.
\end{equation}
To conclude, we prove the strong convergence of the sequence $\{u_n\}_n$ in $H^s_0(\Omega)$.\\
It is clear that $u_n$ solves
	$$(-\Delta) ^{s}u_{n}  =\dfrac{\lambda}{M(||u_n||_{s,2}^{2})}\dfrac{ u_{n}}{\left( \left\vert x\right\vert +n^{-1}\right) ^{2s}}+\dfrac{f(x)}{M(||u_n||_{s,2}^{2})}.$$
	Using $u_n-\bar{u}$ as a test function, we obtain that
	\begin{equation}\label{estim01}
	\displaystyle \int_\Omega (-\Delta) ^{s}u_{n}(u_{n}-\bar{u})dx=
	\dfrac{\lambda}{M(||u_n||_{s,2}^{2})}\displaystyle \int_\Omega \dfrac{ u_{n}(u_n-\bar{u})}{\left( \left\vert x\right\vert +n^{-1}\right) ^{2s}}dx+\dfrac{1}{M(||u_n||_{s,2}^{2})}\displaystyle \int_\Omega f(x)(u_n-\bar{u}) dx.
	\end{equation}
Since the term $\dfrac{1}{M(||u_n||_{s,2}^2)}$ is bounded away from zero and $u_n-\bar{u}\to 0$ strongly in $L^2(\Omega)$, it holds that $$\dfrac{1}{M(||u_n||_{s,2}^2)}\int_\Omega f(x)(u_n-\bar{u}) dx\to 0\mbox{  as  }n\to \infty.$$
We deal now with the term $\displaystyle\int_\Omega\dfrac{ u_{n}\left( u_n-\bar{u}\right) }{\left( \left\vert x\right\vert+n^{-1}\right)^{2s} }dx$.

We have
\begin{eqnarray*}
\int_\Omega\dfrac{u_{n}( u_n-\bar{u})}{(|x|+n^{-1})^{2s}}dx &= & \displaystyle\int_\Omega\dfrac{ \left( u_n-\bar{u}\right)^2 }{\left( \left\vert x\right\vert+n^{-1}\right)^{2s} }dx+\displaystyle\int_\Omega\dfrac{ \bar{u}\left( u_n-\bar{u}\right) }{\left( \left\vert x\right\vert+n^{-1}\right)^{2s} }dx\\
&=& \displaystyle\int_\Omega\dfrac{ \left( u_n-\bar{u}\right)^2 }{\left( \left\vert x\right\vert+n^{-1}\right)^{2s} }dx+\displaystyle\int_\Omega\dfrac{ u_n-\bar{u} }{\left( \left\vert x\right\vert+n^{-1}\right)^s }\dfrac{\bar{u}}{\left( \left\vert x\right\vert+n^{-1}\right)^s}dx.
\end{eqnarray*}
Taking into consideration that $\dfrac{\bar{u}}{{(|x|+n^{-1})^s}}\to \dfrac{\bar{u}}{|x|^s}$ strongly in $L^2(\O)$ and that
$\dfrac{u_n-\bar{u}}{{\left( \left\vert x\right\vert+n^{-1}\right)^s}} \rightharpoonup 0$ weakly in $L^2\left(\Omega\right),$ (as $ n\to \infty$), we obtain
$$
\displaystyle\int_\Omega\dfrac{ u_n-\bar{u} }{\left( \left\vert x\right\vert+n^{-1}\right)^s }\dfrac{\bar{u}}{\left( \left\vert x\right\vert+n^{-1}\right)^s}dx\to 0\mbox{  as  }n\to \infty.
$$
Consequently, and using again the Hardy inequality, we get
\begin{equation}\label{estim02}
\int_\Omega\dfrac{ u_{n}\left( u_n-\bar{u}\right) }{\left( \left\vert x\right\vert+n^{-1}\right)^{2s} }dx=\int_\Omega\dfrac{ \left( u_n-\bar{u}\right)^2 }{\left( \left\vert x\right\vert+n^{-1}\right)^{2s} }dx+o(1)
\leq\dfrac{1}{\Lambda_{N,s}} \left\Vert u_{n}-\bar{u}\right\Vert_{s,2}^{2}+o(1).
\end{equation}
Now, by the weak convergence, it holds that
$$\displaystyle \int_\Omega (-\Delta) ^{s}u_{n}(u_{n}-\bar{u})dx=\left\Vert u_{n}-\bar{u}\right\Vert_{s,2}^{2}+o(1).$$
Therefore, by estimates \eqref{estim01} and \eqref{estim02}, it follows that
$$\left\Vert u_{n}-\bar{u}\right\Vert_{s,2}^{2}\leq\dfrac{\lambda}{\Lambda_{N,s} M(||u_n||_{s,2}^2)}\left\Vert u_{n}-\bar{u}\right\Vert_{s,2}^{2}+o(1).$$
Recall that by \eqref{estim1}, we have $\dfrac{\lambda}{M(||u_n||_{s,2}^2)}<\Lambda_{N,s}-\rho$ for all $n$. Thus
$$\left\Vert u_{n}-\bar{u}\right\Vert_{s,2}^{2}<\displaystyle \dfrac{\Lambda_{N,s}-\rho}{\Lambda_{N,s} }\left\Vert u_{n}-\bar{u}\right\Vert_{s,2}^{2}+o(1).$$
Since $0<\displaystyle\dfrac{\Lambda_{N,s}-\rho}{\Lambda_{N,s} }<1$, then $\left\Vert u_{n}-\bar{u}\right\Vert_{s,2}^{2}\longrightarrow 0$ as $n\to \infty$ and then  $u_n\longrightarrow \bar{u}$ strongly in $H^s_0(\Omega)$.

As a consequence it follows that $M(||u_n||_{s,2})\to M(||u||_{s,2})$. Passing to the limit in problem $(\mathcal{P}^n_s)$, we obtain that $\bar{u}$ solves $(\mathcal{P}_{s,2})$.

It remains to proof yet the uniqueness part.

Assume that $u,v$ are two positive solutions to problem $(\mathcal{P}_{s,2})$. To show the uniqueness we have just to prove that  $M(||u||_{s,2}^{2})=M(||v||_{s,2}^{2})$.
Assume by contradiction that $M(||u||_{s,2}^{2})<M(||v||_{s,2}^{2})$. Setting $u_1=M(||u||_{s,2}^{2})u$ and $v_1=M(||v||_{s,2}^{2})v$, it holds that
$$
(-\Delta) ^{s}v_1=\frac{\l}{M(||v||_{s,2}^{2})} \,\dfrac{v_1}{|x|^{2s}}+f(x),$$
and
$$
(-\Delta) ^{s}u_1=\frac{\l}{M(||u||_{s,2}^{2})} \,\dfrac{u_1}{|x|^{2s}}+f(x).
$$
Since $\frac{\l}{M(||v||_{s,2}^{2})}<\frac{\l}{M(||u||_{s,2}^{2})}$ and $u_1=v_1=0$ in $\mathbb{R}^{N}\setminus \Omega$, then $v_1<u_1$ in $\O$. Thus
$$
M(||v||_{s,2}^{2})v<M(||u||_{s,2}^{2})u \mbox{  in  }\Omega.
$$
Recall that, by hypothesis, $M(||u||_{s,2}^{2})<M(||v||_{s,2}^{2})$, thus $v<u$ in $\O$. Going back to the equation of $v$ and using $v$ as a test function, it follows that
$$
M(||v||_{s,2}^{2})||v||_{s,2}^{2}=\lambda\io\dfrac{v^2}{|x|^{2s}}dx+\io fvdx \le \lambda\io\dfrac{u^2}{|x|^{2s}}dx+\io fudx= M(||u||_{s,2}^{2})||u||_{s,2}^{2}
$$
Since $M(t^2)t^2$ is a strict monotone function, we conclude that $||v||_{s,2}^{2}<||u||_{s,2}^{2}$ and then $M(||v||_{s,2}^{2})\le M(||u||_{s,2}^{2})$, a contradiction with the main hypothesis. Hence the uniqueness result follows.

\end{pf}

\begin{remarks}\label{rem}
It is not difficult to show that the above existence and the uniqueness result holds also for all $f\in L^{\frac{2N}{N+2s}}(\O)$.
\end{remarks}

As a consequence of the previous construction, we have the next regularity result. 
\begin{Proposition}\label{rrem}
Assume that $f\in L^{\frac{2N}{N+2s}}(\O)$, then for all $\s<2_s^*$, there exists a positive constant $C:=C(N, \O, \s)$ such that if $u$ is the solution to problem $(\mathcal{P}_{s,2})$, we have 
$$
||u||_{L^{\s}(\O)}\le C||f||_{L^{\frac{2N}{N+2s}}(\O)}.
$$
\end{Proposition}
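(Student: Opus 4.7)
My plan is to reduce the nonlinear Kirchhoff equation to a \emph{linear} Hardy problem with a subcritical coefficient, obtained by freezing $M$ at the value $M(\|u\|_{s,2}^{2})$, and then to run the standard Hardy--Sobolev--H\"older machinery on the resulting linear equation.

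Writing $\widetilde{\lambda}:=\lambda/M(\|u\|_{s,2}^{2})$ and $\widetilde{f}:=f/M(\|u\|_{s,2}^{2})$, the solution $u$ satisfies $(-\Delta)^{s}u=\widetilde{\lambda}\,u/|x|^{2s}+\widetilde{f}$ in $\Omega$. The first ingredient I will exploit, inherited from the construction in the proof of Theorem~\ref{thf1} (extended to $f\in L^{\frac{2N}{N+2s}}(\Omega)$ via Remark~\ref{rem}), is the strict bound $\widetilde{\lambda}\le \Lambda_{N,s}-\rho$ for some positive $\rho$. This comes from the argument that rules out the resonance profile $|x|^{-(N-2s)/2}$, which would otherwise violate $u\in H_{0}^{s}(\Omega)$. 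Equivalently, $M(\|u\|_{s,2}^{2})-\lambda/\Lambda_{N,s}\ge c_{\rho}:=\lambda\rho/[\Lambda_{N,s}(\Lambda_{N,s}-\rho)]>0$.

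Next, I will test the equation with $u$ itself, apply the Hardy inequality~\eqref{hardy} to control the singular term, then H\"older with the conjugate exponents $2_{s}^{\ast}$ and $2N/(N+2s)$, and finally the fractional Sobolev inequality of Theorem~\ref{Sobolev} with $p=2$. This chain gives
\[
c_{\rho}\,\|u\|_{s,2}^{2}\le \Bigl[M(\|u\|_{s,2}^{2})-\tfrac{\lambda}{\Lambda_{N,s}}\Bigr]\|u\|_{s,2}^{2}\le S^{-1/2}\,\|f\|_{L^{\frac{2N}{N+2s}}(\Omega)}\,\|u\|_{s,2},
\]
so that $\|u\|_{s,2}\le C\,\|f\|_{L^{\frac{2N}{N+2s}}(\Omega)}$ for a constant $C$ depending on $N,\Omega,\lambda,M$. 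Combining the Sobolev embedding $\|u\|_{L^{2_{s}^{\ast}}}\le S^{-1/2}\|u\|_{s,2}$ with H\"older's inequality on the bounded domain $\Omega$ (valid since $\sigma<2_{s}^{\ast}$) then yields
\[
\|u\|_{L^{\sigma}(\Omega)}\le |\Omega|^{\frac{1}{\sigma}-\frac{1}{2_{s}^{\ast}}}\,\|u\|_{L^{2_{s}^{\ast}}(\Omega)}\le C(N,\Omega,\sigma)\,\|f\|_{L^{\frac{2N}{N+2s}}(\Omega)},
\]
which is the claimed estimate.

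The main obstacle I anticipate is making the positivity of $M(\|u\|_{s,2}^{2})-\lambda/\Lambda_{N,s}$ quantitative enough to give a constant independent of the individual solution: the proof of Theorem~\ref{thf1} produces $\rho$ by a contradiction argument rather than by an explicit lower bound, so some care is required to propagate that strict inequality into a usable coercivity constant. Everything else is a routine chaining of Hardy, Sobolev and H\"older inequalities.
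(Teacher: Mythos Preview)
Your anticipated obstacle is fatal, not merely technical. The quantity $M(\|u\|_{s,2}^{2})-\lambda/\Lambda_{N,s}$ is \emph{not} bounded below by a positive constant independent of $f$ when $\lambda\ge m_0\Lambda_{N,s}$. In fact the linear energy estimate $\|u\|_{s,2}\le C\|f\|_{L^{2N/(N+2s)}}$ you claim is false in that regime: for every $f\gneqq 0$ the argument of Theorem~\ref{thf1} forces $M(\|u\|_{s,2}^{2})>\lambda/\Lambda_{N,s}$ strictly, hence $\|u\|_{s,2}^{2}>t_{0}:=M^{-1}(\lambda/\Lambda_{N,s})>0$ uniformly, so $\|u\|_{s,2}$ cannot tend to $0$ as $\|f\|\to 0$. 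Your chain of inequalities therefore only yields $\big(M(\|u\|_{s,2}^{2})-\lambda/\Lambda_{N,s}\big)\|u\|_{s,2}\le C\|f\|$, from which you can extract a (nonlinear) bound on $\|u\|_{s,2}$ but never the linear one needed for the proposition. The $\rho$ you import from Theorem~\ref{thf1} genuinely depends on the particular solution and degenerates along sequences $f_n\to 0$.

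The paper circumvents this by not passing through the energy norm at all. Since $\lambda/M(\|u\|_{s,2}^{2})\le \Lambda_{N,s}$ and $1/M(\|u\|_{s,2}^{2})\le 1/m_0$, one has the pointwise inequality $(-\Delta)^{s}u\le \Lambda_{N,s}\,u/|x|^{2s}+m_0^{-1}f$, so by comparison $u\le w$ where $w$ solves the \emph{critical} linear Hardy problem $(-\Delta)^{s}w=\Lambda_{N,s}\,w/|x|^{2s}+m_0^{-1}f$. The key input is then the regularity theory for that critical problem from \cite{AMPP}, which gives $\|w\|_{L^{\sigma}(\Omega)}\le C\|f\|_{L^{2N/(N+2s)}(\Omega)}$ for every $\sigma<2_s^{*}$, with $C$ independent of $f$. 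Note that this $L^\sigma$ bound holds at the critical Hardy constant even though the corresponding $H^s_0$ bound fails---this is precisely why the comparison route succeeds and the energy route does not.
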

\begin{proof}
Since $u$ solves $(\mathcal{P}_{s,2})$, then $\frac{\l}{M(||u||_{s,2}^{2})}\le \L_{N,s}$ and
$$
(-\Delta) ^{s}u=\frac{\l}{M(||u||_{s,2}^{2})} \,\dfrac{u}{|x|^{2s}}+\frac{f(x)}{M(||u||_{s,2}^{2})}\le
\L_{N,s}\dfrac{u}{|x|^{2s}}+\frac{1}{C}f(x).
$$
Setting $w$ to be the unique solution to the problem
$$
\left\{
\begin{array}{rcll}
(-\Delta) ^{s}w &= & \L_{N,s}\dfrac{w}{|x|^{2s}}+\frac{1}{C}f(x) & \text{ in } \Omega, \\
w & = & 0 &\text{ in } \mathbb{R}^{N}\setminus \Omega,
\end{array}%
\right.
$$
then $u\le w$ in $\O$. Notice that from \cite{AMPP}  we obtain that, for all $\s<2^*_s$,
$$
||w||_{L^{\s}(\O)}\le C||\frac{1}{C}f||_{L^{\frac{2N}{N+2s}}(\O)}\le C||f||_{L^{\frac{2N}{N+2s}}(\O)}.
$$
Hence we conclude.
\end{proof}

To complete this part, we have the next compactness result.
\begin{Proposition}\label{compannn}
Let $\{f_n\}_n\subset L^{\frac{2N}{N+2s}}(\O)$ be such that $f_n\ge 0$ and $||f_n||_{L^{\frac{2N}{N+2s}}(\O)}\le C$ for all $n$. Define $u_n$, the unique solution to problem $(\mathcal{P}_{s,2})$ with $f:=f_n$. Then there exists a $u\in H^s_0(\O)$, such that, us to a subsequence, $u_n\to u$ strongly in $H^s_0(\O)$.
\end{Proposition}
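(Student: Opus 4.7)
The plan is to parallel the strong-convergence argument already carried out in Theorem~\ref{thf1}, but applied uniformly in $n$ rather than to a single datum. First I would establish a uniform $H^s_0$-bound: testing the $n$-th equation with $u_n$ and combining the fractional Hardy inequality with H\"older and the Sobolev embedding $H^s_0(\O)\hookrightarrow L^{2^*_s}(\O)$ gives
$$\Bigl(M(\|u_n\|_{s,2}^2)-\frac{\lambda}{\Lambda_{N,s}}\Bigr)\|u_n\|_{s,2} \;\le\; C\,\|f_n\|_{L^{\frac{2N}{N+2s}}(\O)} \;\le\; C',$$
so hypothesis $(M3)$ rules out $\|u_n\|_{s,2}\to\infty$ and $\{u_n\}$ is bounded in $H^s_0(\O)$.

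By reflexivity and Rellich--Kondrachov I then extract a subsequence with $u_n\rightharpoonup u$ in $H^s_0(\O)$, $u_n\to u$ a.e.\ and in $L^q(\O)$ for every $q<2^*_s$. A further extraction from the reflexive space $L^{\frac{2N}{N+2s}}(\O)$ gives $f_n\rightharpoonup f^*\ge 0$, and $(M1)$--$(M2)$ together with the bound on $\|u_n\|_{s,2}$ permit assuming $M(\|u_n\|_{s,2}^2)\to\mu\ge m_0>0$. Passing to the limit, $u$ solves
$$\mu(-\Delta)^s u \;=\; \lambda\,\frac{u}{|x|^{2s}}+f^* \inn \O, \qquad u=0 \inn \ren\setminus\O,$$
and the linear Hardy result of \cite{AMPP} recalled above forces $\lambda/\mu\le\Lambda_{N,s}$. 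Exactly as in the proof of Theorem~\ref{thf1}, equality would force $u(x)\ge C|x|^{-\frac{N-2s}{2}}$ near the origin and hence contradict $u\in H^s_0(\O)$; consequently $\lambda/\mu<\Lambda_{N,s}-\rho$ for some $\rho>0$.

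Finally, I would upgrade weak to strong convergence by testing with $u_n-u$ and splitting the singular term exactly as in \eqref{estim02}, arriving at
$$\bigl(M(\|u_n\|_{s,2}^2)-\tfrac{\lambda}{\Lambda_{N,s}}\bigr)\|u_n-u\|_{s,2}^2 \;\le\; \int_\O f_n\,(u_n-u)\,dx \;+\; o(1),$$
where the coefficient on the left is bounded below by $m_0\rho/\Lambda_{N,s}>0$. The main obstacle is the remaining mixed integral, which couples a sequence converging only weakly in $L^{2^*_s}(\O)$ with one merely bounded in the dual $L^{\frac{2N}{N+2s}}(\O)$. I would attack it by combining the a.e.\ convergence $u_n\to u$ with the uniform $L^\sigma(\O)$-domination of Proposition~\ref{rrem} (valid for every $\sigma<2^*_s$), splitting $f_n=T_K(f_n)+(f_n-T_K(f_n))$: for fixed $K$, the bounded piece $T_K(f_n)$ is killed by the strong $L^1$-convergence of $u_n-u$, while the unbounded piece is controlled by H\"older combined with the equi-integrability of $|u_n-u|^{\sigma}$ inherited from Proposition~\ref{rrem}, letting $K\to\infty$ at the end. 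Once this mixed term is shown to vanish, the coercivity on the left yields $u_n\to u$ strongly in $H^s_0(\O)$.
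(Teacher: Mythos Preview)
Your strategy parallels Theorem~\ref{thf1}, but the paper takes a genuinely different route, and the place where your argument runs into real trouble is precisely the mixed term $\int_\Omega f_n(u_n-u)\,dx$.

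The splitting $f_n=T_K(f_n)+(f_n-T_K(f_n))$ does not close as you describe. For the tail piece, H\"older against $\|f_n\|_{L^{2N/(N+2s)}}$ forces the complementary factor to be $\big\|(u_n-u)\chi_{\{|f_n|>K\}}\big\|_{L^{2^*_s}}$; making this small uniformly in $n$ as $K\to\infty$ is equi-integrability of $|u_n-u|^{2^*_s}$, \emph{not} of $|u_n-u|^\sigma$ for $\sigma<2^*_s$. But equi-integrability at the critical exponent together with a.e.\ convergence is, by Vitali, exactly strong $L^{2^*_s}$ convergence --- essentially what you are trying to prove. If instead you lower $\sigma$ below $2^*_s$ to exploit Proposition~\ref{rrem}, the conjugate exponent rises above $\tfrac{2N}{N+2s}$ and you lose all control on $f_n$. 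In Theorem~\ref{thf1} this obstacle was absent because a \emph{fixed} $f$ played the role of $f_n$, so $\int_\Omega f(u_n-\bar u)$ vanished by weak convergence alone; with a merely bounded sequence $\{f_n\}$ that mechanism disappears. A related, secondary gap: your argument for the strict inequality $\lambda/\mu<\Lambda_{N,s}$ via the singular lower bound on $u$ presumes $u\not\equiv0$, which is not guaranteed when the weak limit $f^*$ may vanish.

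The paper avoids all of this. After obtaining the uniform $H^s_0$-bound it simply writes $(-\Delta)^s u_n=h_n$ with
\[
h_n:=\frac{\lambda}{M(\|u_n\|_{s,2}^2)}\,\frac{u_n}{|x|^{2s}}+\frac{f_n}{M(\|u_n\|_{s,2}^2)},
\]
and observes that $\{h_n\}$ is bounded in $L^1(\Omega)$ (Hardy for the first summand, $L^{2N/(N+2s)}\hookrightarrow L^1$ for the second). The $L^1$-compactness result of Theorem~\ref{compa} then yields $u_n\to u$ strongly in $W^{s,q}_0(\Omega)$ for every $q<\tfrac{N}{N-s}$, and interpolation against the $H^s_0$-bound upgrades this to every $q<2$. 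No limit equation, no strict Hardy gap $\rho$, and no coupling term $\int f_n(u_n-u)$ are ever needed. (As written, the paper's own argument stops at $q<2$ rather than $q=2$; for the application in Theorem~\ref{non11} only $W^{s,1}_0$-compactness is actually used.)
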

\begin{proof}
Let $w_n$ be the unique solution to the problem
$$
\left\{
\begin{array}{rcll}
(-\Delta) ^{s}w_n &= & \L_{N,s}\dfrac{w_n}{|x|^{2s}}+\frac{1}{C}f_n(x) & \text{ in } \Omega, \\
w_n & = & 0 &\text{ in } \mathbb{R}^{N}\setminus \Omega,
\end{array}%
\right.
$$
then $u_n\le w_n$ and  for all $\s<2^*_s$,
$$
||w_n||_{L^{\s}(\O)}\le C||f_n||_{L^{\frac{2N}{N+2s}}(\O)}.
$$
Hence $||u_n||_{L^{\s}(\O)}\le C$ for all $n$.
If $M(||u_n||_{s,2}^{2})\le 2\frac{\l}{\L_{N,s}}$, then $||u_n||_{s,2}^{2}\le C$. Moreover, if $M(||u_n||_{s,2}^{2})\ge 2\frac{\l}{\L_{N,s}}$, then using $u_n$ as a test function in the problem solved by $u_n$, it follows that
$$
M(||u_{n}||_{s,2}^{2})||u_{n}||_{s,2}^{2}\leq
\frac{\lambda}{\L_{N,s}} ||u_{n}||_{s,2}^{2}+\int_\Omega f_nu_ndx.
$$
Thus
$$
(M(||u_{n}||_{s,2}^{2})-\frac{\lambda}{\L_{N,s}}) ||u_{n}||_{s,2}^{2}
 \leq \int_\Omega f_nu_ndx.
$$
Hence, using H\"older and Sobolev inequalities, we conclude that
$$
||u_{n}||_{s,2}^{2}\le C\mbox{  for all }n.
$$
Hence $\{u_n\}_n$ is bounded in $H^s_0(\O)$. Therefore, we get the existence of $u\in H^s_0(\O)$ such that, up to a subsequence, $u_n\rightharpoonup u$ weakly in $H^s_0(\O)$, $u_n\to u$ strongly in $L^\s(\O)$ for all $\s<2^*_s$ and $(M(||u_{n}||_{s,2}^{2}):=\a_n\to \a_0\ge C$. Notice that $u_n$ solves
	 \begin{equation*}
	 \left\{
\begin{array}{rcll}
( -\Delta)^s u_n &=& \frac{\l}{ M(||v||_{s,q}^q)}\frac{u_n}{|x|^{2s}}+\dfrac{f_n}{ M(||u_n||_{s,q}^q)} &\text{ in }\Omega\\
	u_n &= & 0 &\text{ in } \mathbb{R}^{N}\setminus \Omega.
	 \end{array}
\right.
	 \end{equation*}
Setting $h_n(x):= \frac{\l}{ M(||v||_{s,q}^q)}\frac{u_n}{|x|^{2s}}+\dfrac{f_n}{ M(||u_n||_{s,q}^q)}$, it holds that $||h_n||_{L^1(\O)}\le C$. Therefore by the compactness result in \cite{AAB}, we reach that, up to a subsequence, $u_n\to u$ strongly in $W^{s,q}_0(\O)$ for all $q<\frac{N}{N-s}$.
Thus by interpolation, it holds that $u_n\to u$ strongly in $W^{s,q}_0(\O)$ for all $q<2$ and the result follows.
\end{proof}

\subsection{Some results related to non variational problems.}\label{non}

In this subsection we will assume that $q<2$. Consider the problem
\begin{equation}\label{eq:auxi1}
\left\{
\begin{array}{rcll}
M(\left\Vert u\right\Vert_{s,q} ^{q})(-\Delta) ^{s}u & = & f & \text{ in } \Omega, \\
u & = & 0 &\text{ in } \mathbb{R}^{N}\setminus \Omega,
\end{array}%
\right.
\end{equation}
It is clear that problem \eqref{eq:auxi1} can not be viewed and treated in a variational setting.
We begin by proving the next proposition.
\begin{Proposition}\label{th:auxi}
Assume that $f \in L^{2}(\Omega)$ and $1<q<2$, then problem \eqref{eq:auxi1} has a unique solution $u_{q}$ in $H^s_0(\Omega)$.
\end{Proposition}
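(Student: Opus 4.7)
\medskip
\noindent\textbf{Proof proposal.} The plan is to exploit the structural observation that $M(\|u\|_{s,q}^{q})$ is a \emph{scalar} coefficient in front of $(-\Delta)^{s}u$, so that \eqref{eq:auxi1} decouples into a linear fractional Dirichlet equation together with a one-dimensional fixed-point condition on that scalar.

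First I would solve the underlying linear problem. Since $f\in L^{2}(\O)\hookrightarrow H^{-s}(\O)$, Lax--Milgram (or the scheme in Theorem \ref{entropi}) produces a unique $u_{1}\in H^{s}_{0}(\O)$ with $(-\Delta)^{s}u_{1}=f$ in $\O$ and $u_{1}=0$ in $\ren\setminus\O$. Invoking the continuous embedding $H^{s}_{0}(\O)\hookrightarrow W^{s,q}_{0}(\O)$, valid for $1\le q\le 2$ on bounded domains, the quantity $a:=\|u_{1}\|_{s,q}^{q}$ is finite. If $f\equiv 0$ then $a=0$ and $u\equiv 0$ is plainly the unique solution, so from here on I may assume $a>0$. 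For any $t>0$, linearity forces the rescaled function $u_{t}:=u_{1}/t$ to be the unique $H^{s}_{0}$-solution of $(-\Delta)^{s}u=f/t$, and a direct homogeneity calculation yields $\|u_{t}\|_{s,q}^{q}=a/t^{q}$. Therefore $u_{t}$ solves \eqref{eq:auxi1} precisely when
$$
t \;=\; M\!\left(\frac{a}{t^{q}}\right) \;=:\; h(t).
$$

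The second step is to prove that the scalar equation $t=h(t)$ has a unique positive root. By (M1), the map $t\mapsto M(a/t^{q})$ is non-increasing on $(0,\infty)$, since $t\mapsto a/t^{q}$ is strictly decreasing; hence $t\mapsto h(t)-t$ is strictly decreasing. By (M3), as $t\to 0^{+}$ we have $a/t^{q}\to+\infty$ and consequently $h(t)\to+\infty$, while for $t\ge 1$ the monotonicity of $M$ gives $h(t)\le M(a)$, so the identity term dominates and $h(t)-t\to-\infty$ as $t\to+\infty$. The intermediate value theorem then delivers a unique $t_{*}>0$ with $h(t_{*})=t_{*}$, and $u_{q}:=u_{1}/t_{*}\in H^{s}_{0}(\O)$ is a solution of \eqref{eq:auxi1}.

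Uniqueness comes for free from the same decoupling: given any solution $u\in H^{s}_{0}(\O)$, the positive constant $t:=M(\|u\|_{s,q}^{q})$ forces $u$ to solve the linear problem with datum $f/t$, so by uniqueness $u=u_{1}/t$, and then $t=t_{*}$ by what has already been shown. The step I expect to require the most care is the embedding $H^{s}_{0}(\O)\hookrightarrow W^{s,q}_{0}(\O)$, which is needed only to give meaning to the Kirchhoff coefficient on $H^{s}_{0}$-functions. I would establish it by splitting $D_{\O}$ into the bounded piece $\O\times\O$ (controlled by a H\"older-type inequality in the Gagliardo kernel together with the boundedness of $\O$) and the outer piece $\O\times(\ren\setminus\O)$ (where $u$ vanishes on one factor and the outer integral produces a boundary-distance weight $d(x,\partial\O)^{-sq}$ handled by a fractional Hardy-boundary inequality on $H^{s}_{0}(\O)$).
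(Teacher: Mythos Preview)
Your approach is correct and genuinely different from the paper's. The paper runs a Schauder fixed-point argument in $E=W^{s,q}_{0}(\O)$: it defines $T(v)=u$ through the linear problem $(-\Delta)^{s}u=f/M(\|v\|_{s,q}^{q})$, checks that $T$ maps a ball of $E$ into itself, and then verifies continuity and compactness of $T$ (the latter via the compactness result of Theorem~\ref{compa}); uniqueness is deduced from the strict monotonicity of $t\mapsto M(t^{q})\,t$. Your reduction to the scalar equation $t=M(a/t^{q})$ bypasses all of this infinite-dimensional machinery and settles both existence and uniqueness with the intermediate value theorem on a half-line. This is a real simplification, and it makes transparent why monotonicity of $M$ is the only structural ingredient needed (in fact (M2) alone already forces $h(0^{+})\ge m_{0}>0$, so (M3) is not essential to your argument).

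One caution concerns the embedding $H^{s}_{0}(\O)\hookrightarrow W^{s,q}_{0}(\O)$ for $q<2$ that you flag. The sketch you give for the $\O\times\O$ piece does not close: the obvious H\"older split with exponents $2/q$ and $2/(2-q)$ produces the divergent factor $\iint_{\O\times\O}|x-y|^{-N}\,dx\,dy$, so ``H\"older in the Gagliardo kernel together with boundedness of $\O$'' is not enough as stated; in Besov terms you are asking for $B^{s}_{2,2}(\O)\hookrightarrow B^{s}_{q,q}(\O)$, where the third index moves the wrong way. Fortunately you do not need the embedding in this generality --- you only need $a=\|u_{1}\|_{s,q}^{q}<\infty$ for the specific $u_{1}$. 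Since $f\in L^{2}(\O)\subset L^{1}(\O)$, Theorem~\ref{compa} already places $u_{1}\in W^{s,\sigma}_{0}(\O)$ for every $\sigma<\frac{N}{N-s}$; together with $u_{1}\in W^{s,2}_{0}(\O)$ and the observation that $\|u\|_{W^{s,p}_{0}(\O)}=\|F\|_{L^{p}(d\mu)}$ with $F(x,y)=|u(x)-u(y)|\,|x-y|^{-s}$ and $d\mu=|x-y|^{-N}\,dx\,dy$, standard interpolation of $L^{p}(d\mu)$ norms yields $u_{1}\in W^{s,q}_{0}(\O)$ for every $q\in(1,2]$. The same remark disposes of the corresponding point in your uniqueness step.
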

\begin{pf} To show the existence of a solution we will use the Schauder's Fixed Point Theorem.
Let $E=W_{0}^{s,q}(\Omega)$ and consider the operator $T: E\longrightarrow E$ defined by $T(v)=u$ where $u$ is the unique solution for the problem
	 \begin{equation*}
	 \left\{
\begin{array}{rcll}
( -\Delta)^s u &=& \dfrac{f}{ M(||v||_{s,q}^q)} &\text{ in }\Omega\\
	u &= & 0 &\text{ in } \mathbb{R}^{N}\setminus \Omega.
	 \end{array}
\right.
	 \end{equation*}
Taking into consideration the properties of $M$ and classical results on regularity of the fractional Laplacian we conclude that $T$ is well defined
and
\begin{equation}\label{YY}
||u||_{H^s_0(\Omega)}\leq \dfrac{c_0}{M(||v||_{s,q}^q)}\|f\|_{L^2(\Omega)}\leq \dfrac{c_0}{m_0}\|f\|_{L^2(\Omega)}.
	 \end{equation}
Hence
	 \begin{equation*}
||u||_E \leq \dfrac{c_1}{m_0} \left\|f \right\|_{L^2(\Omega)}
	 \end{equation*}
	 for some positive constant $c_1$ depending only on $\Omega, N$ and $s$.
Choosing $r=m^{-1}_0c_1||f||_{L^2(\Omega)}$, it holds that $T(B_r(0))\subset B_r(0)$, where $ B_r(0)$ is the ball centered at zero with radius $r>0$ of $E$.\\
It is not difficult to show that $T$ is continuous. To finish we have just to prove that $T$ is compact.

\

Let $\{v_n\}_n$ be a bounded sequence in $E$. Define $u_n=T(v_n)$, then from \eqref{YY} it holds that $\{u_n\}_n$ is bounded in $H^s_0(\O)$ and a priori in $E$. Hence, up to a subsequence, we get the existence of $\hat{u}\in H^s_0(\O)$ such that $u_n\rightharpoonup \hat{u}$ weakly in $H^s_0(\O)$.

Setting $g_n=\dfrac{f}{ M(\left\Vert v_n\right\Vert_{s,q} ^{q})}$, then $||g_n||_{L^1(\O)}\le C$ for all $n$. Hence from the compactness result in Theorem \ref{compa} it holds that $u_n\to \hat{u}$ strongly in $W^{s,p}_0(\O)$ for all $p<\frac{N}{N-s}$. Taking into consideration that the sequence $\{u_n\}_n$ is bounded in $H^s_0(\O)$ and using Vitali's Lemma, we conclude that  $u_n\to \hat{u}$ strongly in $W^{s,\sigma}_0(\O)$ for all $\sigma<2$, in particular $u_n\to \hat{u}$ strongly in $E$. Hence $T$ is compact.

Thus by Schauder's Fixed Point Theorem, we get the existence of $\tilde{u}\in E$ such that $T(\tilde{u})=\tilde{u}$ and then $\tilde{u}$ solves problem \eqref{eq:auxi1}.
It is not difficult to show that $u_q\in H^s_0(\O)$.

We prove now the uniqueness part. Suppose that $u_1$ and $u_2$ are two solutions to the problem \eqref{eq:auxi1}, then
	 $$M(\left\Vert u_1\right\Vert_{s,q} ^{q})\left( -\Delta\right)^s u_1=f=M(\left\Vert u_2\right\Vert_{s,q} ^{q})\left( -\Delta\right)^s u_2.$$
Thus
$$
(-\Delta)^s\bigg(M(||u_1||_{s,q}^{q})u_1-M(||u_2||_{s,q}^{q})u_2\bigg)=0.
$$
Since $M(||u_i||_{s,q}^{q})u_i\in H^s_0(\O)$ for $i=1,2$, then
 \begin{equation}\label{eq:norme}
 M(\left\Vert u_1\right\Vert_{s,q} ^{q})u_1=M(\left\Vert u_2\right\Vert_{s,q} ^{q})u_2.
	 \end{equation}
Therefore we get $M(||u_1||_{s,q} ^{q})||u_1||_{s,q}=M(||u_2||_{s,q} ^{q})||u_2||_{s,q}$. Recall that $q>1$, then by $(M1)$ and $(M2)$, the function $t\longmapsto M(t^q)t$ is increasing and then $||u_1||_{s,q}=||u_2||_{s,q}$. Going back to \eqref{eq:norme}, we obtain $u_1=u_2$.
	  \end{pf}
	
In the case where $1<q<\frac{N}{N-s}$, we are able to show that problem \eqref{eq:auxi1} has a unique solution for all $f\in L^1(\O)$.
\begin{Theorem}
Assume that $f \in L^{1}(\Omega)$ and $1<q<\frac{N}{N-s}$, then problem \eqref{eq:auxi1} has a unique solution $\bar{u}$ in $W_{0}^{s,\sigma}(\Omega)$ for all $\sigma<\frac{N}{N-s}$. In particular $\bar{u}\in W_{0}^{s,q}(\Omega)$. Moreover defining $T:L^1(\O)\to W^{s,q}_0(\O)$ with $u=T(f)$ being the unique solution to problem \eqref{eq:auxi1}, then $T$ is a compact operator.
\end{Theorem}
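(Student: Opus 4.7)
The plan is to approximate $f$ by $L^2$ truncations, apply Proposition \ref{th:auxi}, and then exploit the linearization $v:=M(\|u\|_{s,q}^{q})u$ together with the compactness of the resolvent $D:L^1(\O)\to W^{s,q}_0(\O)$ from Theorem \ref{compa}. The key observation is that if $u$ solves \eqref{eq:auxi1}, then the scalar factor $M(\|u\|_{s,q}^{q})$ pulls inside $(-\Delta)^s$, so $v$ satisfies the purely linear equation $(-\Delta)^s v=f$, to which the $L^1$-theory of Theorems \ref{entropi} and \ref{compa} applies directly.

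More precisely, I would set $f_n:=T_n(f)\in L^2(\O)$. Proposition \ref{th:auxi} yields a unique $u_n\in H^s_0(\O)$ with $M(\|u_n\|_{s,q}^{q})(-\Delta)^s u_n=f_n$. Defining $v_n:=M(\|u_n\|_{s,q}^{q})u_n$, one has $(-\Delta)^s v_n=f_n$ with $\|f_n\|_{L^1(\O)}\le\|f\|_{L^1(\O)}$, so Theorem \ref{compa} gives
$$\|v_n\|_{W^{s,q}_0(\O)}\le C\|f\|_{L^1(\O)},$$
and (M2) transfers this bound (up to the factor $1/m_0$) to $u_n$. Compactness of $D$ produces, along a subsequence, $v_n\to v$ strongly in $W^{s,q}_0(\O)$. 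Because $\{M(\|u_n\|_{s,q}^{q})\}\subset[m_0,C]$, a further subsequence gives $M(\|u_n\|_{s,q}^{q})\to\mu\ge m_0$, hence $u_n\to\bar u:=v/\mu$ in $W^{s,q}_0(\O)$. Continuity of $M$ together with $\|u_n\|_{s,q}\to\|\bar u\|_{s,q}$ identifies $\mu=M(\|\bar u\|_{s,q}^{q})$, and passing to the limit in the weak formulation of Definition \ref{defph4} shows $\bar u$ solves \eqref{eq:auxi1}. Since $v$ is itself the $L^1$-weak solution of $(-\Delta)^s v=f$, Theorem \ref{entropi} gives $v\in W^{s,\sigma}_0(\O)$, whence $\bar u\in W^{s,\sigma}_0(\O)$, for every $\sigma<N/(N-s)$.

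Uniqueness is immediate from the same linearization: if $u_1,u_2$ both solve \eqref{eq:auxi1}, then $w_i:=M(\|u_i\|_{s,q}^{q})u_i$ are both weak solutions of $(-\Delta)^s w=f$ in the sense of Definition \ref{defph4}, and uniqueness in Theorem \ref{entropi} forces $w_1=w_2$. Strict monotonicity of $t\mapsto M(t^q)t$ on $\re^+$ (from (M1), (M2) and $q>1$, exactly as in the proof of Proposition \ref{th:auxi}) then yields $\|u_1\|_{s,q}=\|u_2\|_{s,q}$, hence $u_1=u_2$.

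The compactness of $T:L^1(\O)\to W^{s,q}_0(\O)$ follows by the same device: for a bounded sequence $\{f_n\}\subset L^1(\O)$ and $u_n=T(f_n)$, the functions $v_n:=M(\|u_n\|_{s,q}^{q})u_n$ satisfy $(-\Delta)^s v_n=f_n$, so Theorem \ref{compa} supplies a subsequence with $v_n\to v$ in $W^{s,q}_0(\O)$; after a further extraction making $M(\|u_n\|_{s,q}^{q})\to\mu\ge m_0$, the sequence $u_n=v_n/M(\|u_n\|_{s,q}^{q})$ converges in $W^{s,q}_0(\O)$. The main subtlety throughout is that the nonlinear scalar $M(\|u_n\|_{s,q}^{q})$ need not converge a priori; this is resolved by first establishing strong $W^{s,q}_0(\O)$-convergence of $u_n$ along a subsequence and only then invoking continuity of $M$, rather than the reverse order.
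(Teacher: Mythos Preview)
Your proof is correct and follows the same overall scheme as the paper: approximate $f$ by $f_n=T_n(f)$, solve via Proposition~\ref{th:auxi}, obtain a priori bounds through Theorem~\ref{compa}, and pass to the limit. The uniqueness argument is identical to the paper's.

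There is, however, a genuine streamlining in your approach. The paper obtains the bound $M(\|u_n\|_{s,q}^{q})\le C$ by contradiction: it supposes $M(\|u_n\|_{s,q}^{q})\to\infty$, notes that $g_n:=f_n/M(\|u_n\|_{s,q}^{q})$ is then bounded in $L^1(\O)$, and invokes Theorem~\ref{compa} on $(-\Delta)^s u_n=g_n$ to contradict $\|u_n\|_{s,q}\to\infty$. Your linearization $v_n:=M(\|u_n\|_{s,q}^{q})u_n$ bypasses this detour entirely: since $(-\Delta)^s v_n=f_n$ with $\|f_n\|_{L^1}\le\|f\|_{L^1}$, Theorem~\ref{compa} bounds $v_n$ directly, and $(M2)$ transfers the bound to $u_n$. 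The same device makes the compactness of $T$ a one-line consequence of the compactness of $D$, whereas the paper repeats the approximation argument. What your route buys is economy; what the paper's route would buy in other contexts is that it does not require the Kirchhoff coefficient to act as a pure scalar on a linear operator, so it would adapt more readily to, say, a $p$-Laplacian principal part where $M(\cdot)(-\Delta_p)u$ cannot be rewritten as $(-\Delta_p)(M(\cdot)^{1/(p-1)}u)$.
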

\begin{pf}
We set $f_n=T_n(f)$, then $f_n\in L^\infty(\Omega)$. By Theorem \ref{th:auxi}, it follows that the problem
$$
{(\mathcal{P}^n_{s,q})}
\left\{
\begin{array}{rcll}
M(\left\Vert u_n\right\Vert_{s,q} ^{q})(-\Delta) ^{s}u_n & = & f_n(x) & \text{  in   } \Omega, \\
u_n & = & 0 & \text{  in   } \mathbb{R}^{N}\setminus \Omega,
	\end{array}
	\right.
$$
has a unique solution $u_n$. We claim that $M(\left\Vert u_n\right\Vert_{s,q} ^{q})<C$ for all $n\geq 1$.

We argue by contradiction. Assume that $M(\left\Vert u_n\right\Vert_{s,q} ^{q})\longrightarrow +\infty$ as $n\to \infty$. Then by the properties of $M$, we reach that $\lim\limits_{n\to \infty}\left\Vert u_n\right\Vert_{s,q}=\infty$.

Define $g_n\equiv \dfrac{f_n}{M(\left\Vert u_n\right\Vert_{s,q} ^{q})}$, then $|g_n|\le \dfrac{|f_n|}{m_0}$ and hence $||g_n||_{L^1(\O)}\le C$. Thus using Theorem \ref{compa} it holds that $||u_n||_{W^{s,\sigma}_0(\Omega)}\leq \frac{C}{m_0}||f_n||_{L^1(\O)}\le C$ for all $n$ and for all $\sigma <\frac{N}{N-s}$. Choosing $\s=q$, we get $||u_n||_{W^{s,q}_0(\Omega)}\leq C$, a contradiction with the main hypothesis. Thus $M(\left\Vert u_n\right\Vert_{s,q} ^{q})<C$ for all $n$ and then claim follows.

Consequently $\left\Vert u_n\right\Vert_{s,q}\leq C$, therefore there exists $\bar{u}\in W^{s,q}_0(\Omega)$ such that, up to a subsequence,  $u_n\rightharpoonup \bar{u}$ weakly in $W^{s,q}_0(\Omega)$.

Let us show that $u_n\longrightarrow\bar{u}$ strongly in $W^{s,q}_0(\Omega)$.

Recall that $$(-\Delta)^s u_n=\dfrac{f_n}{M(\left\Vert u_n\right\Vert_{s,q} ^{q})}, $$ by the boundedness of the sequence $\{\dfrac{f_n}{M(\left\Vert u_n\right\Vert_{s,q} ^{q})}\}_n$ in $L^1(\O)$ and the compactness result in Theorem \ref{compa}, we conclude that
$u_n\longrightarrow \bar{u}$ strongly in $W^{s,\sigma}_0(\Omega)$, for all $\sigma<\dfrac{N}{N-s}$ and in particular $u_n\longrightarrow \bar{u}$ strongly in $ W^{s,q}_0(\Omega)$

Hence $M(\left\Vert u_n\right\Vert_{s,q} ^{q})\longrightarrow M(\left\Vert \bar{u}\right\Vert_{s,q} ^{q})$.
It is clear that $\bar{u}$ solves the problem \eqref{eq:auxi1}. The uniqueness follows as in the proof of Theorem \ref{th:auxi}.

Define now $T:L^1(\O)\to W^{s,q}_0(\O)$ where $u=T(f)$ is the unique solution to problem \eqref{eq:auxi1}. It is clear that $T$ is well defined. To show that $T$ is compact, we consider a sequence $\{f_n\}_n\subset L^1(\O)$ and define $u_n=T(f_n)$. Then
$$(-\Delta)^s u_n=\dfrac{f_n}{M(\left\Vert u_n\right\Vert_{s,q} ^{q})}. $$ Now, repeating the same computations as above and using the compactness result in Theorem \ref{compa} we deduce that, up to a subsequence, $u_n\longrightarrow \bar{u}$ strongly in $W^{s,\sigma}_0(\Omega)$, for all $\sigma<\frac{N}{N-s}$ and in particular $u_n\longrightarrow \bar{u}$ strongly in $ W^{s,q}_0(\Omega)$. Hence the result follows.
\end{pf}

\

We are now able to state the main result of this subsection.
\begin{Theorem}\label{generalq}
Let $\O\subset \ren$ be a bounded domain such that $0\in \O$ and $1<q<\frac{N}{N-s}$. Suppose that $f \in L^{1}(\Omega)$ is such that $\displaystyle{\int_\Omega f \left| x\right| ^{-\theta} dx}<c$ for some positive constant $\theta$. Then for all $\l>0$, the problem
$$
(\mathcal{P}^{\lambda}_{s,q})
\left\{
\begin{array}{rcll}
M(\left\Vert u\right\Vert_{s,q}^{q})(-\Delta) ^{s}u & = & \l\dfrac{u}{\left| x\right|^{2s} }+f &\text{ in } \Omega, \\
u & = & 0 &\text{ in } \mathbb{R}^{N}\setminus \Omega,
\end{array}
\right.
$$
has a solution $u$ such that $\bar{u}\in W^{s,\s}_0(\Omega)$ for all $\s<\frac{N}{N-s}$.
  \end{Theorem}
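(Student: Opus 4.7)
The plan is a truncation--approximation scheme, solved via Schauder's fixed point theorem, with uniform $n$-independent bounds obtained by combining the Kirchhoff growth $(M3)$ with the weighted integrability of $f$. This is the mechanism that ``breaks'' the Hardy resonance.

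First I would truncate both the Hardy singularity and the datum: set $f_n=T_n(f)$ and consider
\begin{equation*}
(\mathcal{P}^{\l,n}_{s,q})\qquad
M(\|u_n\|_{s,q}^{q})(-\Delta)^{s}u_n=\frac{\l\,u_n^+}{|x|^{2s}+1/n}+f_n\text{ in }\O,\qquad u_n=0\text{ in }\ren\setminus\O.
\end{equation*}
For fixed $n$ this is solved by applying Schauder to $\mathcal{S}_n:W^{s,q}_0(\O)\to W^{s,q}_0(\O)$, $v\mapsto u$, where $u$ is the unique solution (provided by the previous theorem) to $M(\|u\|_{s,q}^q)(-\Delta)^s u=\l v^+/(|x|^{2s}+1/n)+f_n$. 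For fixed $n$ the right-hand side is bounded in $L^1(\O)$ by $Cn\|v\|_{W^{s,q}_0(\O)}+\|f\|_{L^1(\O)}$; combined with $(M2)$, $(M3)$ and Theorem \ref{compa}, this yields a closed ball of $W^{s,q}_0(\O)$ invariant under $\mathcal{S}_n$, while continuity and compactness of $\mathcal{S}_n$ follow from Theorem \ref{compa}. Schauder's theorem then produces $u_n$ solving $(\mathcal{P}^{\l,n}_{s,q})$; testing the equation with $u_n^-$ (which vanishes on the support of $u_n^+$ and of $f_n$) yields $u_n\geq 0$ in $\O$.

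The crux is the $n$-uniform bound $\mu_n:=M(\|u_n\|_{s,q}^q)\leq C$. I would argue by contradiction: if $\mu_n\to\infty$, divide the equation by $\mu_n$ to obtain
$$
(-\Delta)^s u_n=\frac{\l}{\mu_n}\,\frac{u_n}{|x|^{2s}+1/n}+\frac{f_n}{\mu_n}.
$$
Using that $\O$ is bounded, one may replace $\theta$ by a smaller positive value so that $0<\theta<(N-2s)/2$; then $\phi(x)=|x|^{-\theta}$ satisfies $(-\Delta)^s\phi=c_\theta|x|^{-\theta-2s}$ with $c_\theta>0$ given by \eqref{lambda}. Testing the equation against a regularization of $\phi$ (legitimate via the dual pairing with $\mathbb{X}_s$ of Definition \ref{defph4}) and removing the regularization yields
$$
c_\theta\int_\O u_n|x|^{-\theta-2s}dx\leq \frac{\l}{\mu_n}\int_\O u_n|x|^{-\theta-2s}dx+\frac{1}{\mu_n}\int_\O f|x|^{-\theta}dx.
$$
For $n$ large, $\l/\mu_n<c_\theta/2$, hence $\int_\O u_n|x|^{-\theta-2s}dx\leq C/\mu_n$. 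Splitting $\{|x|\leq 1\}$ and $\{|x|>1\}$ gives $\int_\O u_n|x|^{-2s}dx\leq C/\mu_n+C\|u_n\|_{L^1(\O)}$, and feeding this into Theorem \ref{compa} applied to $(-\Delta)^s u_n=h_n$ with $h_n=(\l u_n/(|x|^{2s}+1/n)+f_n)/\mu_n$ yields
$$
\|u_n\|_{W^{s,q}_0(\O)}\leq \frac{C}{\mu_n}+\frac{C}{\mu_n}\|u_n\|_{W^{s,q}_0(\O)},
$$
forcing $\|u_n\|_{s,q}\to 0$, against $\mu_n\to\infty$.

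With $\{\mu_n\}$ bounded, the sources $h_n$ are bounded in $L^1(\O)$ by the same splitting, and Theorem \ref{compa} produces, along a subsequence, $u_n\to\bar u$ strongly in $W^{s,\sigma}_0(\O)$ for every $\sigma<N/(N-s)$, in particular in $W^{s,q}_0(\O)$. Consequently $M(\|u_n\|_{s,q}^q)\to M(\|\bar u\|_{s,q}^q)$, and passing to the limit in $(\mathcal{P}^{\l,n}_{s,q})$ shows that $\bar u$ solves $(\mathcal{P}^{\l}_{s,q})$. The principal obstacle is the third paragraph: the legitimate use of the singular test function $|x|^{-\theta}$, the choice of $\theta\in(0,(N-2s)/2)$ still compatible with the hypothesis on $f$, and the bootstrap control of $\int_\O u_n|x|^{-2s}dx$ through which the growth $(M3)$ actually cancels the Hardy term.
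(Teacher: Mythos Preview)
Your proposal follows the same scheme as the paper: truncate both the potential and the datum, prove $M(\|u_n\|_{s,q}^q)$ is bounded by contradiction via a Hardy--type test, then extract a limit using the $L^1\!\to\!W^{s,q}_0$ compactness of Theorem~\ref{compa}. The one substantive difference is the test function in the contradiction step. You use the explicit power $\phi(x)=|x|^{-\theta}$, a global solution of $(-\Delta)^s\phi=c_\theta\,\phi/|x|^{2s}$ on $\ren$, whereas the paper uses the Dirichlet solution $\varphi$ of $(-\Delta)^s\varphi=\beta\,\varphi/|x|^{2s}+1$ in $\Omega$, $\varphi=0$ outside, with $\beta$ small so that $\varphi\simeq|x|^{-\gamma_\beta}$ and $\gamma_\beta\le\theta$. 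The paper's choice sidesteps two technical points you must handle: (i) $\varphi$ vanishes outside $\Omega$, so the duality pairing is clean, while for your $\phi$ one has to observe that the extra contribution $\int_{\Omega^c}|x|^{-\theta}(-\Delta)^s u_n\,dx$ is nonpositive when $u_n\ge 0$; and (ii) the paper tests the $\varphi$--equation with $u_n\in H^s_0(\Omega)$ rather than the $u_n$--equation with the singular $\phi$, which is the more legitimate direction.

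There is one step the paper carries out explicitly that you skip. After the contradiction closes, you write ``the sources $h_n$ are bounded in $L^1(\Omega)$ by the same splitting'' and then pass to the limit; but your bound $\int_\Omega u_n|x|^{-\theta-2s}\,dx\le C/\mu_n$ was obtained under the hypothesis $\lambda/\mu_n<c_\theta/2$, which is not automatically available once $\mu_n$ is merely bounded. The paper addresses the passage to the limit by proving $u_n/|x|^{2s}\to\bar u/|x|^{2s}$ strongly in $L^1(\Omega)$ via Vitali's lemma, using the weighted estimate $\int_\Omega\varphi\,u_n/|x|^{2s}\,dx\le C$ to get equi--integrability near the origin; you should make this step explicit rather than fold it into ``passing to the limit''.
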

\begin{pf}
Let $\lambda>0$ be fixed and define $f_n=T_n(f)$. Consider the approximating problem
\begin{equation*}{(\mathcal{P}^{n}_{s,q})}
\left\{
\begin{array}{rcll}
M(||u_n||^q_{s,q})(-\Delta) ^{s}u_n & = & \l\dfrac{T_n(u_n)}{\left| x\right|^{2s}+\frac 1n}+f_n(x) & \text{ in } \hspace{0,15cm} \Omega, \\
u_n & = & 0 & \text{ in } \hspace{0,15cm}  \mathbb{R}^{N}\setminus \Omega.
\end{array}
\right.
\end{equation*}	
Using a direct variation of Theorem \ref{compa}, we can show that Problem $({P}^{n}_{s,q})$ has a solution $u_n\in H^s_0(\O)$.

Let begin by proving that $M(||u||^q_{s,q})<m_1$.
We argue by contradiction, if $M(||u_n||^q_{s,q})\longrightarrow +\infty$, then $||u||^q_{s,q}\longrightarrow +\infty$. Observe that
\begin{equation}\label{tlemcen}
 (-\Delta) ^{s}u_n=\dfrac{\lambda}{M(||u_n||^q_{s,q})}\,\dfrac{T_n(u_n)}{\left| x\right|^{2s}+\frac 1n}+\dfrac{f_n}{M(||u_n||^q_{s,q})}
\end{equation}
For simplicity of typing we set
\begin{equation}\label{tr}
g_n\equiv\dfrac{\lambda}{M(||u_n||^q_{s,q})}\,\dfrac{T_n(u_n)}{|x|^{2s}+\frac 1n}+\dfrac{f_n}{M(||u_n||^q_{s,q})}.
\end{equation}
We claim that $\bigg\|\dfrac{u_n}{|x|^{2s}}\bigg\|_{L^1(\Omega)}<C$ for some positive constant $C$.

To prove the claim we define $\varphi$ as the unique solution to the problem
\begin{equation}\label{eps}
 \left\{
\begin{array}{rcll}
(-\Delta) ^{s}\varphi & = & \beta\dfrac{\varphi}{\left| x\right|^{2s} }+1 &\text{ in }\Omega, \\
\varphi & =  & 0 & \text{ in }\mathbb{R}^{N}\setminus \Omega,
\end{array}
\right.
\end{equation}	
where $0<\beta<\L_{N,s}$ to be chosen later. Notice that $\varphi\in L^\infty(\O\backslash B_r(0))$ and $\varphi\approx |x|^{-\g_\beta}$ where
\begin{equation}\label{bb}
\gamma_\beta:= \dfrac{N-2s}{2}-\alpha,
\end{equation}
and $\a$ is given by \eqref{lambda} where $\l$ is substituted by $\beta$. Il it not difficult to show that
$\g_\beta\to 0$ as $\beta\to 0$.

Using $u_n$ as a test function in \eqref{eps} and integrating over $\Omega$, it holds that
$$
\beta\int_\Omega \dfrac{\varphi u_n}{\left| x\right|^{2s}}dx+\int_\Omega u_n dx=\dfrac{\lambda}{M(\left\Vert u_n\right\Vert_{s,q} ^{q})}\int_\Omega \dfrac{u_n \varphi}{\left| x\right|^{2s} }dx+\dfrac{1}{M(\left\Vert u_n\right\Vert_{s,q} ^{q})} \int_\Omega f_n \varphi dx.
$$
Then,$$ (\beta-\dfrac{\lambda}{M(\left\Vert u_n\right\Vert_{s,q} ^{q})})\int_\Omega \dfrac{\varphi u_n}{\left| x\right|^{2s}}dx+\int_\Omega u_n dx\leq \dfrac{1}{m_0}\int_\Omega f_n \varphi dx\leq \dfrac{1}{m_0}\int_\Omega f_n \left| x\right|^{-\g_\b} dx. $$
Now, since $M(||u_n||_{s,q}^{q})\longrightarrow +\infty$, we can choose $n_0$ large enough such that
$\beta-\dfrac{\lambda}{M(||u_n||_{s,q} ^{q})}>\varepsilon_0>0$ if $n\ge n_0$.
Hence, for $n\ge n_0$,
$$\beta_0 \int_\Omega \dfrac{\varphi u_n}{\left| x\right|^{2s}}dx+\int_\Omega u_n dx\leq \dfrac{1}{m_0}\int_\Omega f \left| x\right|^{-\theta_\varepsilon} dx.$$
Recall that $\displaystyle{\int_\Omega f \left| x\right| ^{-\theta} dx}<c$, then we fix now $\beta$ small enough such that $\theta_\varepsilon\le \theta$. Thus $\dyle\int_\Omega f \left| x\right|^{-\theta_\beta}<\infty$.
Thus
\begin{equation}\label{end1}
\beta_0 \int_\Omega \dfrac{\varphi u_n}{\left| x\right|^{2s}}dx+\int_\Omega u_n dx<m^{-1}_0c.
\end{equation}
From which we deduce that $\bigg\|\dfrac{u_n}{|x|^{2s}}\bigg\|_{L^1(\Omega)}<m^{-1}_0c$ and the claim follows.

Now going back to \eqref{tlemcen} and using the fact that the sequence $\{g_n\}_n$ is bounded in $L^1(\O)$, we reach that the sequence $\{u_n\}_n$ is bounded in
$W^{s,\s}_0(\Omega)$ for all $\s<\frac{N}{N-s}$, in particular for $\s=q$. Thus $\left\|u_n \right\|_{s,q }<C_1$ and then $M(\left\Vert u_n\right\Vert_{s,q} ^{q})<\bar{C}$, a contradiction with the main hypothesis. Thus $\{M(||u_n||_{s,q}^{q})\}_n$ is bounded in $\re^+$.

Hence, we get the existence of $\bar{u}\in W^{s,q}_0(\Omega)$ such that, up to a subsequence, $u_n\rightharpoonup \bar{u}$ weakly in $W^{s,q}_0(\Omega)$,
$u_n\to\bar{u}$ strongly in $L^\s(\O)$ for all $\s<q^*_s$ and $u_n\to \bar{u}\:\:a.e \mbox{  in }\Omega$.

Setting $\alpha_n:=M(\left\|u_n \right\|^q_{s,q})$, then we can assume that $\a_n\longrightarrow \bar{\alpha}\geq m_0.$

Let us show that $\dfrac{u_n}{|x|^{2s}}\to \dfrac{u}{|x|^{2s}}$ strongly in $L^1(\O)$. It is clear that $\dfrac{u_n}{|x|^{2s} }\to \dfrac{\bar{u}}{|x|^{2s}}$  a.e. in $\Omega.$ Using the fact that $u_n\to\bar{u}$ strongly in $L^\s(\O)$ for all $\s<q^*_s$, we reach that
$$
\dfrac{u_n}{|x|^{2s} }\to \dfrac{\bar{u}}{|x|^{2s}}\mbox{  strongly  in  }L^1(\O\backslash B_R(0) \mbox{  for all   }R>0.
$$
Hence to conclude we have just to show that
$$
\dfrac{u_n}{|x|^{2s} }\to \dfrac{\bar{u}}{|x|^{2s}}\mbox{  strongly  in  }L^1(B_R(0)) \mbox{  for some }R>0.
$$
Recall that from \eqref{end1}, we have $\dyle\int_\Omega \dfrac{\varphi u_n}{\left| x\right|^{2s}}dx<C$ for all $n$. By the fact that $\varphi\ge C_\beta|x|^{-\theta_\e}$ in $B_R(0)$, it holds that
$$
C_\beta R^{-\theta_\beta}\int_{B_R(0)}\dfrac{u_n}{\left| x\right|^{2s}}\le C\mbox{  for all   }n.
$$
Fix $R>0$ to be chosen later and consider $E\subset B_R(0)$ a measurable set. Then
\begin{eqnarray*}
\int_{E}\dfrac{u_n}{\left| x\right|^{2s}}dx &= & \int_{E\cap B_R(0)}\dfrac{\varphi u_n}{\left| x\right|^{2s}}dx+\int_{E\backslash B_R(0)}\dfrac{\varphi u_n}{\left| x\right|^{2s}}dx\\
&\le & C_\beta R^{\theta_\beta}+\int_{E\backslash B_R(0)}\dfrac{\varphi u_n}{\left| x\right|^{2s}}dx.
\end{eqnarray*}
Let $\e>0$, then we can choose $\d>0$ such that if $|E|<\d$ and $n\ge n_0$, then
$$
\int_{E\backslash B_R(0)}\dfrac{\varphi u_n}{\left| x\right|^{2s}}dx\le \frac{\e}{2}.
$$
It is clear that we can choose $R$ small enough such that
$$
C_\beta R^{\theta_\beta}\le \frac{\e}{2}.
$$
Combining the above estimates, it holds that for $n\ge n_0$ and if $|E|\le \d$, then
$$
\int_{E}\dfrac{u_n}{\left| x\right|^{2s}}dx\le \e.
$$
Hence, using Vitali's lemma, we reach that
$\dfrac{u_n}{|x|^{2s}}\to \dfrac{u}{|x|^{2s}}$ strongly in $L^1(\O)$.

Reminding the definition of $g_n$ in \eqref{tr} and using the compactness result in Theorem \ref{compa} we conclude, up to a subsequence, $u_n\to \bar{u}$ strongly in $W^{\s,q}_0(\Omega)$ for all $\s<\frac{N}{N-s}$. Thus
$$
M(\left\|u_n \right\|^q_{s,q})\to M(\left\|\bar{u}\right\|^q_{s,q})\mbox{  as   }n\to \infty.
$$
Therefore we conclude that $\bar{u}$ satisfies
$$(-\Delta) ^{s}\bar{u}=\dfrac{\lambda}{M(\left\Vert \bar{u} \right\Vert_{s,q} ^{q})}.\dfrac{\bar{u}}{\left| x\right|^{2s} }+\dfrac{f}{M(\left\Vert \bar{u} \right\Vert_{s,q} ^{q})},$$ and then $\bar{u}$ is solution of Problem $({P}^{\lambda}_{s,q})$.
\end{pf}
\begin{remark}
If $f\in L^\s(\O)$ for $\s>1$, then using H\"older inequality we can prove that the condition stated in Theorem \ref{generalq} holds for $f$ for some $\theta>0$. Thus problem $({P}^{\lambda}_{s,q})$ has a solution for all $f\in L^\s(\O)$ with $\s>1$ and for all $\l>0$.
\end{remark}

\section{Existence results in the case $f(x,u):=u^p+g(x)$.}\label{gen}
Let consider now the nonlinear problem
\begin{equation}\label{non1}
\left\{
\begin{array}{rcll}
M(\left\Vert u\right\Vert_{s,2}^{2})(-\Delta) ^{s}u & = & \l\dfrac{u}{\left| x\right|^{2s} }+u^p + \mu g&\text{ in } \Omega, \\
u & >& 0 & \text{ in } \Omega,\\
u & = & 0 &\text{ in } \mathbb{R}^{N}\setminus \Omega.
\end{array}
\right.
\end{equation}
In the case where $M\equiv 1$, then, as in the local case, for $\l<\L_{N,s}$ fixed, it is possible to show the existence of a critical value $p_*>2^*_s-1$ such that the problem \eqref{non1} has a nonnegative super-solution if and only if $p<p_*$. This can be obtained using suitable radial computation, according the behavior of any super-solution near the origin and a suitable test function, see  \cite{BMP} and \cite{BDT}.

The goal of this section is to check that, under suitable hypothesis on $g$ and $p$, the existence of a solution to problem \eqref{non1} for all $\l>0$.
The first existence result is the following.
\begin{Theorem}\label{non11}
Let $g\in L^\s(\O)$ with $\s\ge \frac{2N}{N+2s}$ and suppose that $p<2^*_s-1$, then there exists $\mu^*>0$ such that for all $\l>0$ and for all $\mu<\mu^*$, problem \eqref{non1} has a solution $u\in H^s_0(\O)$.
\end{Theorem}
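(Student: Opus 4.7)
The strategy is to obtain $u$ as a fixed point of an operator that decouples the Kirchhoff--Hardy block from the superlinear perturbation $u^p$. Fix $\lambda>0$ and consider the positive cone $K=\{v\in H^s_0(\Omega):v\ge 0\}$. For $v\in K$ I define $\mathcal{T}(v):=u$, where $u$ is the unique positive $H^s_0(\Omega)$-solution, provided by Theorem~\ref{thf1} together with Remark~\ref{rem}, of the Kirchhoff--Hardy problem
\begin{equation*}
\left\{
\begin{array}{rcll}
M(\|u\|_{s,2}^2)(-\Delta)^s u &=& \l\,\dfrac{u}{|x|^{2s}}+v^p+\mu g & \text{ in } \Omega,\\
u &=& 0 & \text{ in } \mathbb{R}^N\setminus\Omega.
\end{array}
\right.
\end{equation*}
The Sobolev embedding $H^s_0(\Omega)\hookrightarrow L^{2^*_s}(\Omega)$ together with the strict subcriticality $p<2^*_s-1$ yields $v^p+\mu g\in L^{\frac{2N}{N+2s}}(\Omega)$ with
\begin{equation*}
\|v^p+\mu g\|_{L^{\frac{2N}{N+2s}}(\Omega)}\le C_1\|v\|_{s,2}^p+\mu\|g\|_{L^{\frac{2N}{N+2s}}(\Omega)},
\end{equation*}
so $\mathcal{T}$ is well defined and sends $K$ into $K$ (positivity of $u$ comes from Theorem~\ref{thf1}).

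The decisive step is to exhibit a ball $\overline{B_R(0)}\cap K$ invariant under $\mathcal{T}$. Testing the equation for $u=\mathcal{T}(v)$ against $u$ and combining the fractional Hardy inequality with Sobolev gives
\begin{equation*}
\Big(M(\|u\|_{s,2}^2)-\dfrac{\l}{\Lambda_{N,s}}\Big)\|u\|_{s,2}\le C_2\big(\|v\|_{s,2}^p+\mu\|g\|_{L^{\frac{2N}{N+2s}}(\Omega)}\big).
\end{equation*}
Using $(M3)$ and $(M1)$, one chooses $R_0=R_0(\l,M)>0$ such that $M(r^2)\ge 2\l/\Lambda_{N,s}$ for $r\ge R_0$; on the set $\{\|u\|_{s,2}\ge R_0\}$ the previous estimate improves to $\|u\|_{s,2}\le\tfrac{\Lambda_{N,s}}{\l}C_2(\|v\|_{s,2}^p+\mu\|g\|)$. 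Consequently, whenever $\|v\|_{s,2}\le R$,
\begin{equation*}
\|\mathcal{T}(v)\|_{s,2}\le\max\Big\{R_0,\,\tfrac{\Lambda_{N,s}}{\l}C_2\big(R^p+\mu\|g\|_{L^{\frac{2N}{N+2s}}(\Omega)}\big)\Big\}.
\end{equation*}
The plan is to select $R\ge R_0$ satisfying $\tfrac{\Lambda_{N,s}}{\l}C_2 R^p\le R/2$ and then set $\mu^*:=\tfrac{\l R}{2\Lambda_{N,s}C_2\|g\|_{L^{\frac{2N}{N+2s}}(\Omega)}}$, so that $\mathcal{T}$ maps $\overline{B_R(0)}\cap K$ into itself for every $\mu<\mu^*$.

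Compactness and continuity of $\mathcal{T}$ on this ball come straight from Proposition~\ref{compannn}: a sequence $\{v_n\}$ bounded in $H^s_0(\Omega)$ produces data $\{v_n^p+\mu g\}$ bounded in $L^{\frac{2N}{N+2s}}(\Omega)$, so $\{\mathcal{T}(v_n)\}$ admits a subsequence converging strongly in $H^s_0(\Omega)$; continuity follows from the Nemitsky map $v\mapsto v^p$ being continuous from $H^s_0(\Omega)$ into $L^{\frac{2N}{N+2s}}(\Omega)$, again because $p<2^*_s-1$. Schauder's fixed point theorem then yields $\bar u\in\overline{B_R(0)}\cap K$ with $\mathcal{T}(\bar u)=\bar u$, which is the desired positive solution of~\eqref{non1}.

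The hard part is the ball-invariance calibration. Since $(M3)$ is only qualitative, the threshold $R_0$ at which $M(R_0^2)$ dominates $\l/\Lambda_{N,s}$ can be large when $\l$ is large or when $M$ grows slowly; simultaneously the superlinear term $C_2 R^p$ must be absorbed into $R/2$. Balancing these two constraints is exactly the ``breaking of resonance'' mechanism evoked in the title: it is the combination of $(M3)$ with the smallness of $\mu$ that lets the argument close for every $\l>0$, without any assumption relating $\l$ to the Hardy constant $\Lambda_{N,s}$, at the price of letting $\mu^*$ depend on $\l$, on $M$, and on $\|g\|_{L^{\frac{2N}{N+2s}}(\Omega)}$.
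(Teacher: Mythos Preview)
Your invariance step has a genuine gap in the superlinear range $p>1$, which is the case of interest here. You require $R\ge R_0(\lambda)$ together with $\tfrac{\Lambda_{N,s}}{\lambda}C_2 R^{p}\le R/2$, i.e.\ $R\le R_1(\lambda):=\big(\tfrac{\lambda}{2\Lambda_{N,s}C_2}\big)^{1/(p-1)}$. Under $(M1)$--$(M3)$ alone, $M$ may grow arbitrarily slowly (e.g.\ $M(t)=m_0+\log(1+t)$), so $R_0(\lambda)$, defined by $M(R_0^2)=2\lambda/\Lambda_{N,s}$, can grow super-polynomially in $\lambda$, while $R_1(\lambda)\sim\lambda^{1/(p-1)}$. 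For large $\lambda$ one then has $R_0(\lambda)>R_1(\lambda)$ and no admissible $R$ exists. Shrinking $\mu$ does not help: even at $\mu=0$ the constraint $R_0\le R_1$ must hold, and it is this constraint---not the $\mu$-term---that fails. Your closing paragraph identifies the tension correctly but the claimed resolution (``the combination of $(M3)$ with the smallness of $\mu$'') is not sufficient; an extra growth hypothesis on $M$ of the type $M(t)\gtrsim t^{(p-1)/2}$ would be needed for your route.

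The paper sidesteps this by running the fixed point in an $L^r$ ball ($p<r<2^*_s$) inside $W^{s,1}_0(\Omega)$ rather than in $H^s_0(\Omega)$. The crucial ingredient you do not use is Proposition~\ref{rrem}: since any solution of the auxiliary Kirchhoff--Hardy problem satisfies $\lambda/M(\|u\|_{s,2}^2)\le\Lambda_{N,s}$, comparison with the \emph{critical} Hardy problem gives $\|u\|_{L^r}\le C\|v^p+\mu g\|_{L^{2N/(N+2s)}}$ with a constant $C$ depending only on $N,\Omega,r$---in particular independent of $\lambda$ and of $M$. The ball-invariance inequality then reads $C(\rho^{p}+\mu\|g\|)\le\rho$, which for $p>1$ is solvable for small $\rho$ and yields a $\mu^*$ that is uniform in $\lambda$, as the statement requires. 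Your $H^s_0$-estimate, obtained by testing with $u$, necessarily carries the factor $(M(\|u\|_{s,2}^2)-\lambda/\Lambda_{N,s})^{-1}$ and cannot be made $\lambda$-uniform.
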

\begin{proof}
In the case $M=1$, existence of solution is proved under the condition $\l\le \L_{N,s}$ and the comparison principle using a suitable radial supersolution. However this argument is not applicable in our case and we have to use a different argument in order to show the existence of a solution. The main idea is to use a suitable fixed point argument.

Fix $p<r<2^*_s$, we can show the existence of $\mu^*>0$ such that if $\mu<\mu^*$, then the algebraic equation
$$
C(l+\lambda^{\ast} \|f\|_{L^{\frac{2N}{N+2s}}(\O)})=l^{\frac{1}{p}},
$$
where $C$ is a universal constant that will be chosen later.

Define the set
$$
E:= \left\{ \varphi \in W_0^{s,1}(\Omega) : ||\varphi||_{L^r(\O)}\le l^{\frac 12}\right\},
$$
Then $E$ is a closed convex set of $W_0^{s,1}(\Omega)$.

Now, we define $T: E \to W_0^{s,1}(\Omega)$ by $T(\varphi) = u$, where $u$ is the weak solution to
\begin{equation} \label{eqf}
\left\{
\begin{array}{rcll}
M(||u||_{s,2}^{2})(-\Delta) ^{s}u & = & \l\dfrac{u}{\left| x\right|^{2s}} + \varphi_+^p+ \mu g(x)\,, & \mbox{ in } \Omega,\\
u & = 0\,, &  \mbox{ in } \ren\setminus \Omega.
\end{array}
\right.
\end{equation}
It is clear that if $u = T(u)$, then $u$ solves problem \eqref{non1}.  Hence, to prove Theorem \ref{non11}, we shall show that $T$ has fixed point belonging to $W_0^{s,2}(\Omega) \cap \mathcal{C}^{0,\alpha}(\Omega)$ for some $\alpha > 0$.

The proof will be given in several steps.

{\bf Step 1: $T$ is well defined.}

Since $\varphi\in E$, then $\varphi_+^p+g\in L^{\frac{2N}{N+2s}}(\O)$. Thus using Theorem \ref{thf1} and the Remark \ref{rem},  we get the existence and the uniqueness of $u\in H^s_0(\O)$ such that $u$ solves the problem \eqref{eqf}. Hence $u\in W^{s,1}_0(\O)$ and then $T$ is well defined.

{\bf Step 2: $T(E) \subset E$.}

Let $\varphi  \in E$, we define $u = T(\varphi)$. By the regularity result in Proposition \ref{rrem}, it follows that
\begin{equation} \label{ineq10}
\begin{aligned}
||u||_{L^r(\O)}
& \leq C \big\| \varphi_+^p+ \lambda g(x) \big \|_{L^{\frac{2N}{N+2s}}(\O)}\\
& \leq C(||\varphi||^{\frac{p}{r}}_{L^{r}(\Omega)}+ \lambda^{\ast} \|f\|_{L^{\frac{2N}{N+2s}}(\O)})\\
& \leq C(l+\lambda^{\ast} \|f\|_{L^{\frac{2N}{N+2s}}(\O)})\le l^{\frac{1}{p}}.
\end{aligned}
\end{equation}
Since, we have proved previously that $u \in W_0^{s,1}(\Omega)$, then using the definition of $l$, we conclude that $u \in E$ and so, that $T(E) \subset E$.

{\bf Step 3: $T$ is continuous.}

Let $\{\varphi_n\}_n \subset E$ be such that $\varphi_n \to \varphi$ in $W_0^{s,1}(\Omega)$. Consider $u_n = T(\varphi_n)$ and $u = T(\varphi)$.
Define
$$
 h_n(x) := (\varphi_n)_+^p + \lambda h(x),
$$
then to get the desired result, we have just to show that $h_n\to h$ strongly in $L^{\frac{2N}{N+2s}}(\O)$ where
 $$
 h(x) := (\varphi)_+^p + \lambda g(x).
 $$
 Using Sobolev inequality in $W^{s,1}_0(\O)$ and since $\varphi_n \to \varphi$ strongly in $W_0^{s,1}(\Omega)$, then
 $\varphi_n \to \varphi$ strongly in $L^{\frac{N}{N-1}}(\O)$. Since $\{\varphi_n\}_n$ is bounded in $L^r(\O)$, then using an interpolation arguments it holds that $\varphi_n \to \varphi$ strongly in $L^{\s}(\O)$ for all $\s<r$ and in particular in $L^{\frac{2N}{N+2s}}(\O)$. Hence $u_n\to u$ strongly in $H^{s}_0(\O)$ and then the continuity of $T$ follows.

 {\bf Step 4: $T$ is compact.}

Let $\{\varphi_n\} \subset E$ be a bounded sequence in $W_0^{s,1}(\Omega)$ and define $u_n = T(\varphi_n)$. We will prove that, up to a subsequence, $u_n \to u$ in $W_0^{s,1}(\Omega)$ for some $u \in W_0^{s,1}(\Omega)$.
\medbreak

Recall that $\{\varphi_n\}_n \subset E$, then as in the proof of the {\bf Step 3}, we reach that $\{\varphi_n\}_n$ is a bounded sequence in $L^{\frac{2N}{N+2s}}(\O)$. Now, setting
$$h_n(x):= (\varphi_n)_+^p + \lambda g(x),$$
we have that $\{h_n\}$ is a bounded sequence in $L^{\frac{2N}{N+2s}}(\O)$. The result then follows from the compactness result in Proposition \ref{compa}.

As a conclusion, and since $E$ is a closed convex set of $W_0^{s,1}(\Omega)$ and, by the previous steps, we can apply the Schauder fixed point Theorem to get the existence of $u \in E$ such that $T(u) = u$. Thus, we conclude that problem \eqref{non1} has a weak solution for all $0 < \lambda \leq \lambda^{\ast}$. It is clear that $u \in H^s_0(\Omega)$.

\end{proof}

\end{document}